\documentclass[12pt]{amsart}
\usepackage{amsthm, amssymb, color}
\usepackage{amsfonts, amscd}
\usepackage{epsfig,multicol}

\usepackage{tikz}
\usepackage{tikz-cd}
\usepackage{tikz-3dplot}
\usetikzlibrary{shapes.geometric, calc}%draw an n-gon

\usetikzlibrary{math}

\setlength{\topmargin}{0cm} \setlength{\oddsidemargin}{0cm}
\setlength{\evensidemargin}{0cm} \setlength{\textheight}{20cm}
\setlength{\textwidth}{16.3cm}
\theoremstyle{plain} \numberwithin{equation}{section}
\newtheorem{theorem}{Theorem}[section]
\newtheorem{corollary}[theorem]{Corollary}
\newtheorem{proposition}[theorem]{Proposition}
\newtheorem{lemma}[theorem]{Lemma}

\theoremstyle{definition}

\newtheorem{example}[theorem]{Example}
\newtheorem*{remark}{Remark}

\def\Z{\mathbb Z}
\def\C{\mathbb C}
\def\R{\mathbb R}
\def\Q{\mathbb Q}

\def\x{\mathbf{x}}
\def\n{\mathbf{n}}
\def\z{\mathbf{z}}

\DeclareMathOperator{\Sign}{Sign}

\begin{document}

\title[Symmetric matrices defined by plane vector sequences]{Symmetric matrices defined by \\
plane vector sequences}

\author[M. Masuda]{Mikiya Masuda}
\address{Osaka Central Advanced Mathematical Institute, Osaka Metropolitan University, Sumiyoshi-ku, Osaka 558-8585, Japan.}
\email{mikiyamsd@gmail.com}

\date{\today}
%\thanks{The author was partially supported by Grant-in-Aid for Scientific Research 19204007}
\subjclass[2000]{Primary 57S12, 14M25; Secondary 15B99}
\keywords{signature, plane vectors, quasitoric orbifold.}

\begin{abstract}
Motivated by a work of Fu-So-Song, we associate a symmetric matrix $A$ to a plane vector sequence $v$ and give a formula to find the signature of $A$ in terms of the sequence $v$. When $A$ is nonsingular, we interpret the relation between $A$ and $A^{-1}$ from a topological viewpoint. Finally, we associate an omnioriented quasitoric orbifold $X$ of real dimension four to the sequence $v$ and show that $A^{-1}$ is the intersection matrix of the characteristic suborbifolds of $X$. 
\end{abstract}

\maketitle 

\section{Introduction}

Let $v=(v_0,v_1,\dots,v_n,v_{n+1})$ $(n\ge 1)$ be a sequence of plane vectors where
\begin{equation} \label{eq:vi}
\text{$v_0=\begin{pmatrix}0\\
1\end{pmatrix}$, $v_i=\begin{pmatrix}a_i\\
b_i\end{pmatrix}\in \R^2$ for $1\le i\le n$, and $v_{n+1}=\begin{pmatrix}1\\
0\end{pmatrix}$}.
\end{equation}
Motivated by Fu-So-Song \cite{fu-so-so23}, we form a symmetric matrix $A$ of order $n$ with $a_ib_j$ as $(i,j)$ entry for $i\le j$ from the sequence $v$, that is
\begin{equation} \label{eq:matrix_A}
A=\begin{pmatrix}
a_1b_1& a_1b_2& \dots &a_1b_{n-1}& a_1b_n\\
a_1b_2&a_2b_2&\dots&a_2b_{n-1}&a_2b_n\\
\vdots&\vdots&\ddots& \vdots&\vdots\\
a_1b_{n-1}&a_2b_{n-1}&\dots&a_{n-1}b_{n-1}&a_{n-1}b_{n}\\
a_1b_n&a_2b_n&\dots&a_{n-1}b_n&a_nb_n
\end{pmatrix}
\end{equation}
One can easily find that 
\begin{equation} \label{eq:determinant_A}
\det A=(-1)^{n+1}\prod_{i=0}^{n}\det(v_i,v_{i+1})=a_1b_n\prod_{i=1}^{n-1}(a_{i+1}b_i-a_ib_{i+1}).
\end{equation}
Since $A$ is symmetric, its eigenvalues are all real numbers and the signature of $A$, denoted by $\Sign(A)$, is defined to be the number of positive eigenvalues of $A$ minus the number of negative eigenvalues of $A$. 
If $\det(v_i,v_{i+1})=0$ for some $i$, then one can see that the symmetric matrix $A'$ defined by a vector sequence $v'$ with $v_i$ or $v_{i+1}$ removed from $v$ has the same signature as $A$. Therefore, we assume 
\begin{equation} \label{eq:assumption}
\text{$\det(v_i,v_{i+1})\not=0$ for every $i=0,\dots,n$, i.e. $\det A\not=0$}. 
\end{equation}

We understand that $v_i$ rotates toward $v_{i+1}$ with angle less than $\pi$ for each $i$. The vectors $v_0,v_1,\dots,v_n, v_{n+1},v_0$ rotate around the origin of $\R^2$ possibly many times and may go back and forth. We introduce two invariants of the sequence $v=(v_0,v_1,\dots,v_n,v_{n+1})$:
\begin{equation}
\begin{split}
R(v)&=\text{ the rotation number of $v$ around the origin of $\R^2$},\\
S(v)&=1+\sum_{i=0}^n{\rm sgn}(\det(v_i,v_{i+1}))
\end{split}
\end{equation}
where ${\rm sgn}(a)$ denotes $1$ if $a>0$ and $-1$ if $a<0$, and $1$ at the right hand side above comes from ${\rm sgn}(\det(v_{n+1},v_0))$. Our first main result is the following. 

\begin{theorem} \label{theo:main}
$\Sign(A)=4R(v)-S(v)$. 
\end{theorem}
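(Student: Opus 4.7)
The plan is to proceed by induction on $n$. The base case $n=0$ is immediate: the matrix $A$ is empty so $\Sign(A)=0$, while for the sequence $v=(v_0,v_{n+1})$ direct inspection gives $R(v)=0=S(v)$.

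For the inductive step, I would remove the last interior vector and consider $v^*:=(v_0,v_1,\dots,v_{n-1},v_{n+1})$ with associated matrix $A^*$; this is precisely the top-left $(n-1)\times(n-1)$ principal submatrix of $A$. In order to apply Cauchy's interlacing theorem cleanly, $A^*$ should be nonsingular, which by \eqref{eq:determinant_A} amounts to $\det(v_{n-1},v_{n+1})\ne 0$. Both sides of the desired identity are locally constant on the open nondegenerate locus \eqref{eq:assumption}: the left side because the eigenvalues of $A$ vary continuously and none crosses zero there; the right side because $R$ and $S$ are integer-valued and continuous (no two consecutive vectors are antiparallel, and the signs defining $S$ remain fixed). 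Hence a small perturbation of $v_{n-1}$ lets me assume $\det(v_{n-1},v_{n+1})\ne 0$ without loss of generality.

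Under this assumption Cauchy interlacing yields $\Sign(A)-\Sign(A^*)=\operatorname{sgn}(\det A/\det A^*)$, and substituting \eqref{eq:determinant_A} for both $v$ and $v^*$ gives $\det A/\det A^*=-\det(v_{n-1},v_n)\det(v_n,v_{n+1})/\det(v_{n-1},v_{n+1})$. With $\epsilon_i:=\operatorname{sgn}(\det(v_i,v_{i+1}))$ and $\epsilon':=\operatorname{sgn}(\det(v_{n-1},v_{n+1}))$, this reads $\Sign(A)-\Sign(A^*)=-\epsilon_{n-1}\epsilon_n\epsilon'$. By the inductive hypothesis applied to $v^*$, it then suffices to establish the identity $4(R(v)-R(v^*))-(S(v)-S(v^*))=-\epsilon_{n-1}\epsilon_n\epsilon'$. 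Here $S(v)-S(v^*)=\epsilon_{n-1}+\epsilon_n-\epsilon'$ is immediate from the definition of $S$. For the rotation number, letting $\theta(u,w)\in(-\pi,\pi)$ denote the signed angle from $u$ to $w$, one has
\[
R(v)-R(v^*)=\tfrac{1}{2\pi}\bigl[\theta(v_{n-1},v_n)+\theta(v_n,v_{n+1})-\theta(v_{n-1},v_{n+1})\bigr],
\]
and this bracketed sum is a multiple of $2\pi$ lying in $(-3\pi,3\pi)$, hence in $\{-2\pi,0,2\pi\}$. A short case analysis over the eight sign vectors $(\epsilon_{n-1},\epsilon_n,\epsilon')$ shows the sum equals $+2\pi$ exactly when the signs are $(+,+,-)$, equals $-2\pi$ exactly when they are $(-,-,+)$, and vanishes in the other six cases. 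Substituting these values case by case verifies the identity and closes the induction.

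The main obstacle is the geometric case analysis for $R(v)-R(v^*)$: algebraically the identity is just a check of eight sign triples, but the geometric fact that inserting $v_n$ contributes exactly one extra winding around the origin precisely when the concatenated path $v_{n-1}\to v_n\to v_{n+1}$ goes the ``long way around'' relative to the direct shortest rotation from $v_{n-1}$ to $v_{n+1}$ requires some care with the convention that each consecutive step turns by less than $\pi$.
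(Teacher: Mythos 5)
Your proof is correct, but it takes a genuinely different route from the paper's. The paper first normalizes the sequence by repeatedly replacing $v_i$ with $-v_i$ until every vector lies in the upper half plane, after checking that both sides of the identity are invariant under such a flip; in that situation $R(v)=0$, and the signature is then read off from Sylvester's criterion applied to the full sequence of leading principal minors $\Delta_0,\dots,\Delta_n$, whose consecutive ratios are computed from \eqref{eq:determinant_A}. You instead induct on $n$: the matrix of the truncated sequence $v^*$ is the leading principal submatrix $A^*$, Cauchy interlacing gives $\Sign(A)-\Sign(A^*)={\rm sgn}(\det A/\det A^*)$, and an eight-case sign analysis tracks how $R$ and $S$ change when $v_n$ is reinserted. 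The two arguments use the same algebraic ingredient (the minor formula) in equivalent ways --- your interlacing step is in effect a hands-on proof of the one instance of Sylvester's criterion that is needed --- but they distribute the geometric work differently: the paper's flip trick eliminates the rotation number once and for all, at the cost of a small case analysis showing $4R-S$ is invariant under $v_i\mapsto -v_i$, while you confront $R$ directly through the identity $\theta(v_{n-1},v_n)+\theta(v_n,v_{n+1})-\theta(v_{n-1},v_{n+1})\in\{-2\pi,0,2\pi\}$ and its correlation with the sign triple. Your version needs the extra perturbation to arrange $\det(v_{n-1},v_{n+1})\ne 0$ (the paper needs an analogous perturbation to arrange $b_k>0$), and you should either state the base case as $n=1$ or justify the empty-matrix convention for $n=0$; these are cosmetic points, and all eight sign triples do check out against $-\epsilon_{n-1}\epsilon_n\epsilon'$.
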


\begin{remark}
For an almost complex compact manifold $X$ of complex dimension two, it is well-known that $T(X)=\frac{1}{4}(\Sign(X)+\chi(X))$ where $T(X)$ is the Todd genus of $X$, $\Sign(X)$ is the signature of $X$, and $\chi(X)$ is the Euler characteristic of $X$. Hence we have $\Sign(X)=4T(X)-\chi(X)$. Theorem~\ref{theo:main} is a combinatorial analogue of this identity. 
\end{remark}

\begin{example} \label{exam:1}
(1) We take $n=2$ and $v_1=\begin{pmatrix}-1\\
-1\end{pmatrix}$, $v_2=\begin{pmatrix}0\\
-1\end{pmatrix}$. Then $A=\begin{pmatrix} 1& 1\\
1&0\end{pmatrix}$ (so $\Sign(A)=0$) while $R(v)=1$ and $S(v)=4$. 

(2) We take $n=2$ and $v_1=\begin{pmatrix}-1\\
-1\end{pmatrix}$, $v_2=\begin{pmatrix}-2\\
-1\end{pmatrix}$. Then $A=\begin{pmatrix} 1& 1\\
1&2\end{pmatrix}$ (so $\Sign(A)=2$) while $R(v)=1$ and $S(v)=2$. 

(3) We take $n=4$ and 
\[
v_1=\begin{pmatrix}-1\\
-1\end{pmatrix},\ v_2=\begin{pmatrix}1\\
0\end{pmatrix},\ v_3=\begin{pmatrix}0\\
1\end{pmatrix},\ v_4=\begin{pmatrix}-1\\
-1\end{pmatrix}
\]
Then 
\[
A=\begin{pmatrix} 1&0&-1&1\\
0&0&1&-1\\
-1&1&0&0\\
1&-1&0&1\end{pmatrix}
\]
One can find $\Sign(A)=2$ while $R(v)=2$ and $S(v)=6$. 
\end{example}

It follows from the definition of the signature, $\Sign(A)$ must be $n-2k$ for some $0\le k\le n$. The following corollary shows that any such value can be realized as $\Sign(A)$. 

\begin{corollary} \label{coro:positive_definite}
Suppose that all $v_i$'s lie on the upper half plane of $\R^2$. If the sequence $v=(v_0, v_1,\dots,v_{n+1})$ rotates counterclockwise up to $v_k$ for some $0\le k\le n$ and clockwise from $v_k$ around the origin, then $\Sign(A)=n-2k$. 
In particular, $A$ is positive definite {\rm (}i.e. $\Sign(A)=n${\rm )} if $v=(v_0, v_1,\dots,v_n, v_{n+1})$ rotates around the origin clockwise. 
\end{corollary}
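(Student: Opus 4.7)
The plan is to apply Theorem~\ref{theo:main} and compute the two invariants $R(v)$ and $S(v)$ directly from the hypothesis on the shape of the sequence. The guiding observation is that when all the $v_i$ lie in the closed upper half plane and each step rotates by an angle strictly less than $\pi$, the direction of rotation matches the direction of motion of the argument: if we write $\theta_i\in[0,\pi]$ for the argument of $v_i$ (so $\theta_0=\pi/2$ and $\theta_{n+1}=0$), then a counterclockwise step means $\theta_{i+1}>\theta_i$ and a clockwise step means $\theta_{i+1}<\theta_i$. This is exactly where the upper half plane assumption does its work, and it is the only mildly delicate point in the argument.

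First I would compute $R(v)$. Under the hypothesis, the arguments $\theta_0,\theta_1,\dots,\theta_{n+1}$ strictly increase from $\pi/2$ up to $\theta_k$ and then strictly decrease from $\theta_k$ down to $0$. The signed angular increment accumulated along $v_0\to v_1\to\cdots\to v_{n+1}$ is therefore $(\theta_k-\pi/2)+(0-\theta_k)=-\pi/2$, and the closing step $v_{n+1}\to v_0$ contributes $+\pi/2$ (the unique counterclockwise rotation by $\pi/2$ taking $(1,0)$ to $(0,1)$). The total signed angle around the loop is $0$, so $R(v)=0$.

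Next I would compute $S(v)$. Among the $n+1$ terms $\mathrm{sgn}(\det(v_i,v_{i+1}))$ for $0\le i\le n$, the first $k$ are $+1$ (counterclockwise steps) and the remaining $n-k+1$ are $-1$ (clockwise steps). Combined with the explicit $+1$ coming from $\det(v_{n+1},v_0)=1$, this gives
\[
S(v)=1+k-(n-k+1)=2k-n.
\]
Substituting into Theorem~\ref{theo:main} yields $\Sign(A)=4R(v)-S(v)=n-2k$, which is the desired formula.

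Finally, the \emph{in particular} assertion corresponds to the case $k=0$: if the whole sequence $v_0,v_1,\dots,v_{n+1}$ rotates clockwise, then every consecutive pair has negative determinant, so Theorem~\ref{theo:main} (or the formula just established) gives $\Sign(A)=n$, i.e.\ $A$ is positive definite. I do not anticipate any real obstacle beyond carefully verifying that the angle-monotonicity claim in the opening paragraph is a genuine consequence of the upper half plane hypothesis together with the convention that each rotation is by less than $\pi$.
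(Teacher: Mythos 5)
Your proof is correct and follows the same route as the paper: apply Theorem~\ref{theo:main} after computing $R(v)=0$ and $S(v)=2k-n$ from the upper-half-plane hypothesis. The paper's own proof is a one-line version of exactly this computation; your elaboration of why $R(v)=0$ via the monotone behaviour of the arguments $\theta_i$ is a correct justification of what the paper leaves implicit.
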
 

\begin{proof}
In this case $R(v)=0$ and $S(v)=-n+2k$ since $\det(v_i,v_{i+1})<0$ if and only if $k\le i\le n$. 
\end{proof}

A simple calculation shows that the inverse $A^{-1}$ of our matrix $A$ is a nonsingular tridiagonal symmetric matrix with entries in the superdiagonal (i.e. the diagonal just above the main diagonal) of $A^{-1}$ being all nonzero (Proposition~\ref{prop:inverse_A}). Conversely, any such nonsingular tridiagonal symmetric matrix can be obtained as $A^{-1}$ of our matrix $A$ (Proposition~\ref{prop:bijection}). 
We observe the relation between $A$ and $A^{-1}$ from a topological viewpoint in Section~\ref{sect:4}. This observation is made in \cite{fu-so-so24} from a different perspective when each $v_i$ is a primitive integer vector and the sequence $v$ rotates around the origin once counterclockwise. 

To any integer vector sequence $v$ satisfying the condition \eqref{eq:assumption}, one can associate an omnioriented quasitoric orbifold $X$ of real dimension four. When $\{v_i, v_{i+1}\}$ forms a basis of $\Z^2$ for any $0\le i\le n$, $X$ is smooth. For instance, the $X$ associated to the integer vector sequence $v$ in Example~\ref{exam:1} is respectively $\C P^2\#\overline{\C P^2}$, $\C P^2\#{\C P^2}$, $3\C P^2\#\overline{\C P^2}$, where $\overline{\C P^2}$ denotes $\C P^2$ with the opposite orientation. Note that $\Sign(X)=\Sign(A)$. % in these cases. 
In general, the characteristic suborbifolds $X_1,\dots,X_n$ of $X$ corresponding to $v_1,\dots,v_n$ define a basis $\alpha_1,\dots,\alpha_n$ of $H_2(X;\R)$. We prove that the intersection matrix of $X_1,\dots,X_n$ is the inverse of $A$ (Theorem~\ref{theo:main2}). This result together with the observation made in Sections~\ref{sect:3} and~\ref{sect:4} implies the following. 

\begin{theorem} \label{theo:main2_intro}
Let $X$ be an omnioriented quasitoric orbifold associated to an integer vector sequence $v$ satisfying \eqref{eq:assumption} and let $\alpha_1^*,\dots,\alpha_n^*$ be the Kronecker dual to $\alpha_1,\dots,\alpha_n$, i.e. $\alpha_i^*\in H^2(X;\R)$ and $\langle \alpha_i^*,\alpha_j\rangle$ is the Kronecker delta $\delta_{ij}$. Then $a_ib_j=\langle \alpha_i^*\cup\alpha_j^*,[X]\rangle$ for $i\le j$, i.e. $A=(\langle \alpha_i^*\cup\alpha_j^*,[X]\rangle)$, where $[X]$ denotes the fundamental class of $X$.
\end{theorem}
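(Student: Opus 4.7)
The plan is to derive Theorem~\ref{theo:main2_intro} as a short linear-algebra consequence of Theorem~\ref{theo:main2}, which asserts that the intersection matrix $M:=(\alpha_i\cdot\alpha_j)$ of the characteristic suborbifolds equals $A^{-1}$, together with Poincar\'e duality on the closed four-dimensional orbifold $X$. Both $A$ and the matrix $(\langle\alpha_i^*\cup\alpha_j^*,[X]\rangle)$ are symmetric (the latter because cup product is graded-commutative on degree-two classes), so it suffices to prove equality as full $n\times n$ matrices, which dispenses with the hypothesis $i\le j$.

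The key step is a change of basis in $H^2(X;\R)$ between the Kronecker dual basis $\{\alpha_i^*\}$ and the Poincar\'e dual basis $\{\beta_i\}$, where $\beta_i:=\mathrm{PD}(\alpha_i)$. Poincar\'e duality supplies the two identities
\begin{equation*}
\langle\beta_i,\alpha_j\rangle=\alpha_i\cdot\alpha_j=M_{ij},\qquad
\langle\beta_k\cup\beta_l,[X]\rangle=\alpha_k\cdot\alpha_l=M_{kl}.
\end{equation*}
Writing $\alpha_i^*=\sum_k C_{ik}\beta_k$ and imposing $\langle\alpha_i^*,\alpha_j\rangle=\delta_{ij}$ gives $CM=I$, whence $C=M^{-1}=A$ by Theorem~\ref{theo:main2}. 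Substituting this expansion into the cup product pairing yields
\begin{equation*}
\langle\alpha_i^*\cup\alpha_j^*,[X]\rangle=\sum_{k,l}A_{ik}A_{jl}\langle\beta_k\cup\beta_l,[X]\rangle=(AMA^T)_{ij}.
\end{equation*}
Since $A$ is symmetric and $M=A^{-1}$, this collapses to $(AA^{-1}A)_{ij}=A_{ij}=a_ib_j$ (for $i\le j$), completing the argument.

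With the decomposition above, the proof of Theorem~\ref{theo:main2_intro} itself is purely formal: all the genuine content is packaged into Theorem~\ref{theo:main2} and the observations of Sections~\ref{sect:3}--\ref{sect:4}. The only point that needs care is to confirm that Poincar\'e duality, together with the identification of the intersection pairing with the cup product evaluated on $[X]$, carries over from closed oriented $4$-manifolds to the orbifold setting; this is standard for a closed orientable rational homology four-manifold such as $X$, provided one works with real coefficients in order to avoid torsion issues. Hence the main obstacle lies not in this theorem but in its inputs: establishing Theorem~\ref{theo:main2}, i.e.\ computing the orbifold intersection numbers $X_i\cdot X_j$ explicitly in terms of the coordinates of the $v_i$ and recognizing the result as $A^{-1}$.
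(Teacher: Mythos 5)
Your proposal is correct and takes essentially the same route as the paper: your change-of-basis computation between the Kronecker dual basis and the Poincar\'e dual basis (yielding $C=M^{-1}$ and $CMC^{T}=A$) is precisely the content of the paper's Lemma~\ref{lemm:relation}, and the identification $M=A^{-1}$ is Theorem~\ref{theo:main2}, exactly the two inputs the paper combines to deduce Theorem~\ref{theo:main2_intro}.
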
 

This result is proved in \cite{fu-so-so23} (by a different method) also when each $v_i$ is a primitive integer vector and the sequence $v$ rotates around the origin once counterclockwise (in this case $X$ is a genuine toric orbifold). 

\medskip
\noindent
{\bf Acknowledgment.} The author thanks Xin Fu, Tseleung So, and Jongbaek Song for helpful discussions and Hiraku Abe for informing him of the reference \cite{lu-ti92}. He is supported in part by the HSE University Basic Research Program, JSPS Grant-in-Aid for Scientific Research 22K03292, and Osaka Central Advanced Mathematical Institute (MEXT Joint Usage/Research Center on Mathematics and Theoretical Physics JPMXP0723833165).
\medskip

\section{Proof of Theorem~\ref{theo:main}}

For a while, we allow $v_i$'s $(1\le i\le n)$ to be arbitrary plane (even zero) vectors. 
The following is \eqref{eq:determinant_A} in the introduction. 

\begin{lemma} \label{lemm:determinant_A}
$\det A=a_1b_n\prod_{i=1}^{n-1}(a_{i+1}b_i-a_ib_{i+1})$. 
\end{lemma}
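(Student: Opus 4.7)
The plan is to prove the formula by induction on $n$, using a single column operation to peel off one factor at a time.

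For the base case $n=1$ we have $A=(a_1b_1)$ and the stated product is empty, so $\det A=a_1b_n=a_1b_1$, as required. For the inductive step I would assume the formula for $(n-1)\times(n-1)$ and analyze $A=A_n$ by operating on its last column. Column $n$ of $A_n$ is $(a_1b_n,a_2b_n,\dots,a_{n-1}b_n,a_nb_n)^{T}$ and column $n-1$ is $(a_1b_{n-1},\dots,a_{n-1}b_{n-1},a_{n-1}b_n)^{T}$. Replace column $n$ by
\[
\text{(column }n\text{)}-\tfrac{b_n}{b_{n-1}}\text{(column }n-1\text{)}.
\]
In rows $1,\dots,n-1$ the $i$th entry becomes $a_ib_n-\tfrac{b_n}{b_{n-1}}a_ib_{n-1}=0$, while in row $n$ it becomes
\[
a_nb_n-\tfrac{b_n}{b_{n-1}}a_{n-1}b_n=\tfrac{b_n}{b_{n-1}}\bigl(a_nb_{n-1}-a_{n-1}b_n\bigr).
\]
Column operations do not change the determinant, and the new column $n$ has a single nonzero entry in row $n$. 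Cofactor expansion along it yields
\[
\det A_n=\tfrac{b_n}{b_{n-1}}\bigl(a_nb_{n-1}-a_{n-1}b_n\bigr)\cdot\det A_{n-1},
\]
where $A_{n-1}$ is simply the top-left $(n-1)\times(n-1)$ block of $A_n$, which is visibly the matrix of the same shape associated to $(a_1,\dots,a_{n-1},b_1,\dots,b_{n-1})$. Invoking the inductive hypothesis $\det A_{n-1}=a_1b_{n-1}\prod_{i=1}^{n-2}(a_{i+1}b_i-a_ib_{i+1})$ and multiplying gives the claimed identity.

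The only subtlety is the division by $b_{n-1}$, which is not legitimate if $b_{n-1}=0$. Since both sides of the identity are polynomials in $a_1,\dots,a_n,b_1,\dots,b_n$, this is not a genuine obstacle: the identity holds on the Zariski dense open set $\{b_{n-1}\neq 0\}$ and therefore everywhere. Alternatively, one could carry out the same reduction by a row rather than column operation, or treat the $b_{n-1}=0$ case separately by a cofactor expansion along the last row. I expect this polynomial/continuity step to be the only non-mechanical point of the argument.
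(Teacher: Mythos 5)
Your proof is correct and follows essentially the same route as the paper: induction on $n$, clearing the last column by subtracting $\tfrac{b_n}{b_{n-1}}$ times column $n-1$, and handling the division by $b_{n-1}$ by treating the entries as indeterminates (your Zariski-density remark is the same device the paper invokes by regarding the $a_i,b_i$ as indeterminates). No gaps.
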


\begin{proof}
We regard $a_i$'s and $b_i$'s as indeterminates and prove the lemma by induction on $n$. When $n=1$, the lemma is obvious. Suppose that $n\ge 2$ and the lemma holds for $n-1$. Adding the $(n-1)$st column multiplied by $-b_n/b_{n-1}$ to the $n$-th column in \eqref{eq:matrix_A}, we obtain 
\begin{equation} \label{eq:determinant_A2}
\det A=\begin{vmatrix}
a_1b_1& a_1b_2& \dots &a_1b_{n-1}& 0\\
a_1b_2&a_2b_2&\dots&a_2b_{n-1}&0\\
\vdots&\vdots&\ddots& \vdots&\vdots\\
a_1b_{n-1}&a_2b_{n-1}&\dots&a_{n-1}b_{n-1}&0\\
a_1b_n&a_2b_n&\dots&a_{n-1}b_n&a_nb_n-a_{n-1}b_n\cdot \frac{b_n}{b_{n-1}}
\end{vmatrix}
\end{equation}
Since the $(n,n)$ entry above is $(a_nb_{n-1}-a_{n-1}b_n)b_n/b_{n-1}$, it follows from the induction assumption that \eqref{eq:determinant_A2} turns into 
\[
\begin{split}
\det A&=a_1b_{n-1}\left(\prod_{i=1}^{n-2}(a_{i+1}b_i-a_ib_{i+1})\right)(a_nb_{n-1}-a_{n-1}b_n)\frac{b_n}{b_{n-1}}\\
&=a_1b_n\prod_{i=1}^{n-1}(a_{i+1}b_i-a_ib_{i+1}),
\end{split}
\]
proving the lemma. 
\end{proof}

If $v_i$ is zero for some $1\le i\le n$, then entries in the $i$-th row and $i$-th column of $A$ are all zero. Therefore, if $A'$ is $A$ with $i$-th row and $i$-th column removed, then the set of eigenvalues of $A'$ agrees with the set of eigenvalues of $A$ with one $0$ removed. In particular, $\Sign(A')=\Sign(A)$. The $A'$ is associated to the sequence $v$ with $v_i$ removed, so we may remove $v_i$ from $v$ in order to find $\Sign(A)$ whenever $v_i$ is zero. Thus, we may assume that all $v_i$'s are nonzero. 

Suppose that $v_i$ and $v_{i+1}$ are linearly dependent for some $0\le i\le n$. If $i=0$, then $a_1=0$ since $v_0=\begin{pmatrix}0\\
1\end{pmatrix}$; so entries in the $1$st row and $1$st column of $A$ are all zero. Therefore, we may remove $v_1$ from the sequence $v$ by the same reason as above. A similar argument works when $i=n$. 

In the sequel, we assume $v_{i+1}=kv_i$ for some $k\in \R$ and $1\le i\le n-1$. Then, if $P$ denotes a nonsingular matrix of order $n$ with $1$'s on the diagonal entries, $-k$ on the $(i+1,i)$ entry, and $0$ on the other entries, then $A'=PAP^T$ is $A$ with all entries in the $(i+1)$st row and $(i+1)$st column replaced by $0$'s. Since $A'=PAP^T$ and $P$ is nonsingular, Sylvester's law of inertia says that $\Sign(A)=\Sign(A')$. Therefore, we may remove $v_{i+1}$ from the sequence $v$ in this case by the same reason as above. 

Thus, in order to find $\Sign(A)$, we may assume that $v_i$ and $v_{i+1}$ are linearly independent for any $0\le i\le n$ and hence $\det A\not=0$ by Lemma~\ref{lemm:determinant_A}.

We recall the following fact which will be used to prove Theorem~\ref{theo:main}. 

\begin{theorem}[Sylvester] \label{theo:Sylvester}
Let $A$ be a symmetric matrix of order $n$ with real entries such that every principal upper left $k\times k$ minor $\Delta_k$ is nonzero. Then the number of negative eigenvalues of $A$ is equal to the number of sign changes in the sequence
\[
\Delta_0=1,\ \Delta_1,\dots,\ \Delta_n=\det A.
\] 
\end{theorem}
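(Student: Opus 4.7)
The plan is to prove Sylvester's theorem by induction on $n$, using a single block congruence to peel off the last row and column and reduce to the $(n-1)\times(n-1)$ case. The central engine is Sylvester's law of inertia (already invoked earlier in the section): if $A'=PAP^T$ with $P$ nonsingular, then $A$ and $A'$ have the same number of positive eigenvalues, the same number of negative eigenvalues, and the same number of zero eigenvalues. The base case $n=1$ is trivial: $\Delta_1=A$ is a single real number, and the number of negative eigenvalues is $1$ or $0$ according to whether $\Delta_0=1$ and $\Delta_1$ differ in sign or not.

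For the inductive step, write $A$ in block form
\[
A=\begin{pmatrix} A_{n-1} & w\\ w^T & a_{nn}\end{pmatrix},
\]
where $A_{n-1}$ is the leading $(n-1)\times(n-1)$ principal submatrix and $w\in\R^{n-1}$. The leading minors of $A_{n-1}$ are exactly $\Delta_1,\dots,\Delta_{n-1}$, all nonzero by hypothesis, so in particular $A_{n-1}$ is invertible. Setting
\[
P=\begin{pmatrix} I_{n-1} & 0\\ -w^T A_{n-1}^{-1} & 1\end{pmatrix},
\]
a direct block multiplication gives
\[
PAP^T=\begin{pmatrix} A_{n-1} & 0\\ 0 & s\end{pmatrix}, \qquad s:=a_{nn}-w^T A_{n-1}^{-1}w.
\]

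Next I would use the Schur determinant identity $\det A=(\det A_{n-1})\cdot s$, i.e.\ $\Delta_n=\Delta_{n-1}\,s$, to conclude that $s=\Delta_n/\Delta_{n-1}$; hence $s<0$ precisely when $\Delta_{n-1}$ and $\Delta_n$ have opposite signs. By Sylvester's law of inertia, the number of negative eigenvalues of $A$ equals the number of negative eigenvalues of the block-diagonal matrix $PAP^T$, which is the number of negative eigenvalues of $A_{n-1}$ plus $1$ if $s<0$ and plus $0$ if $s>0$. By the induction hypothesis, applied to $A_{n-1}$ with its leading minor sequence $\Delta_0,\Delta_1,\dots,\Delta_{n-1}$, the number of negative eigenvalues of $A_{n-1}$ equals the number of sign changes in this shorter sequence. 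Adding the extra sign change contributed by the pair $(\Delta_{n-1},\Delta_n)$ iff $s<0$ yields exactly the total number of sign changes in $\Delta_0,\Delta_1,\dots,\Delta_n$, completing the induction.

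The main obstacle is essentially bookkeeping: one must be careful that the block congruence is genuinely well defined (which requires $\Delta_{n-1}\neq 0$, guaranteed by hypothesis), and one must correctly identify the sign of the Schur complement $s$ with the ratio $\Delta_n/\Delta_{n-1}$ so that the sign-change count matches. Beyond that, no new ideas are needed; everything else is a clean induction driven by Sylvester's law of inertia.
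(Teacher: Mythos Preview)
Your proof is correct and is the standard textbook argument: induct on $n$, use the block congruence with the Schur complement to split off the last row and column, and invoke Sylvester's law of inertia. The identification $s=\Delta_n/\Delta_{n-1}$ is exactly what links the extra negative eigenvalue to the extra sign change, and your hypothesis that all $\Delta_k\neq 0$ is precisely what makes $A_{n-1}$ invertible at every stage of the induction.

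Regarding comparison with the paper: there is nothing to compare. The paper does not prove this theorem at all; it simply states it as a known fact (``We recall the following fact which will be used to prove Theorem~\ref{theo:main}'') and then uses it as a black box in the proof of Theorem~\ref{theo:main}. So your proposal is not an alternative to the paper's proof but rather a self-contained proof of a result the paper takes for granted.
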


\begin{proof}[Proof of Theorem~\ref{theo:main}]
If we replace $v_i$ by $-v_i$ for some $1\le i\le n$, then the $i$-th column and $i$-th row in $A$ are both multiplied by $-1$; so $\Sign(A)$ remain unchanged by this replacement. On the other hand, the right hand side of the identity in the theorem also remain unchanged by this replacement. Indeed, one can observe that if $v_{i-1}, v_i, v_{i+1}$ rotate counterclockwise (resp. clockwise), then $R(v)$ decreases (resp. increases) by $1$ and $S(v)$ decreases (resp. increases) by $4$, and otherwise $R(v)$ and $S(v)$ remain unchanged; so $4R(v)-S(v)$ remain unchanged in any case. 
Therefore, we may assume that every $v_i$ lies on the upper half plane of $\R^2$. Since $R(v)=0$ in this case, what we have to prove is $\Sign(A)=-S(v)$ when all $v_i$'s lie on the upper half plane of $\R^2$. 

We may assume that $b_k>0$ for any $1\le k\le n$ by rotating $v_k$ slightly when $b_k=0$. The matrix $A$ slightly changes by this slight rotation and the eigenvalues of $A$ slightly change. However, since $\det A\not=0$, the signs of the eigenvalues do not change and hence $\Sign(A)$ remain unchanged. 

It follows from Lemma~\ref{lemm:determinant_A} that 
\[
\Delta_k=a_1b_k\prod_{i=1}^{k-1}(a_{i+1}b_i-a_ib_{i+1})\not=0\quad \text{for $1\le k\le n$},
\] 
where we understand $\prod_{i=1}^{k-1}(a_{i+1}b_i-a_ib_{i+1})=1$ when $k=1$. Then, for $0\le k\le n-1$ we have 
\[
\frac{\Delta_{k+1}}{\Delta_k}=\frac{b_{k+1}}{b_k}(a_{k+1}b_k-a_kb_{k+1})=
-\frac{b_{k+1}}{b_k}\det(v_k,v_{k+1}).
\]
Since $b_{k+1}/b_k>0$, the number of sign changes in the sequence 
\[
\Delta_0=1,\ \Delta_1,\dots,\ \Delta_n=\det A
\] 
is the number of $k$'s for $0\le k\le n-1$ with $\det(v_k,v_{k+1})>0$. Therefore, if we denote the number of negative (resp. positive) eigenvalues of $A$ by $n_-$ (resp. $n_+$), then 
\[
n_-=\sum_{\substack{0\le k\le n-1\\ \det(v_k,v_{k+1})>0}}{\rm sgn}(\det(v_k,v_{k+1}))
\]
by Theorem~\ref{theo:Sylvester}, and 
\[
n_+=-\sum_{\substack{0\le k\le n-1\\ \det(v_k,v_{k+1})<0}}{\rm sgn}(\det(v_k,v_{k+1})). 
\]
Hence 
\begin{equation} \label{eq:n+-}
\Sign(A)=n_+-n_-=-\sum_{k=0}^{n-1}{\rm sgn}(\det(v_k,v_{k+1})).
\end{equation}
Since $b_n>0$ and $v_{n+1}=\begin{pmatrix}1\\
0\end{pmatrix}$, we have $\det(v_n,v_{n+1})<0$ and hence ${\rm sgn}(\det(v_n,v_{n+1}))=-1$. This together with \eqref{eq:n+-} shows $\Sign(A)=-S(v)$, proving the theorem. 
\end{proof}

\begin{remark}
Theorem~\ref{theo:main} also follows from \cite[Corollary 3.3 with $y=1$]{ha-ma03} applied to a multi-fan associated to the sequence $v$. 
\end{remark}

\section{The inverse of the matrix $A$} \label{sect:3}

We assume that our matrix $A$ in \eqref{eq:matrix_A} is nonsingular. 

\begin{proposition} \label{prop:inverse_A}
We set 
\[
\begin{split}
m_i&=\det(v_i,v_{i+1})\quad \text{for\ $0\le i\le n$},\\
s_i&=-\det(v_{i-1},v_{i+1})/m_{i-1}m_i \quad\text{for\ $1\le i\le n$}.
\end{split}
\]
Then the inverse of $A$ is a tridiagonal symmetric matrix given by 
\begin{equation} \label{eq:inverse_A}
{\small\begin{pmatrix} s_1& m_1^{-1}& 0&0&\dots &0\\
m_1^{-1}& s_2& m_2^{-1}&0& \dots&0\\
0&m_2^{-1}&s_3& m_3^{-1}&\dots&0\\
\vdots&\ddots&\ddots&\ddots&\ddots&\vdots\\
0&\dots&0&m_{n-2}^{-1}&s_{n-1}&m_{n-1}^{-1}\\
0&\dots&0&0&m_{n-1}^{-1}&s_n
\end{pmatrix}}
\end{equation}
\end{proposition}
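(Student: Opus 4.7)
The plan is to verify directly that $AB=I$, where $B$ denotes the tridiagonal matrix on the right-hand side of \eqref{eq:inverse_A}; since both $A$ and $B$ are symmetric this also yields $BA=I$, and hence $B=A^{-1}$. The whole computation is driven by one vector identity. For any three vectors $u_1,u_2,u_3\in\R^2$, Cramer's rule gives
\[
u_1\det(u_2,u_3)-u_2\det(u_1,u_3)+u_3\det(u_1,u_2)=0,
\]
and applied to $(v_{k-1},v_k,v_{k+1})$ and divided by $m_{k-1}m_k$ this yields the key vector identity
\[
\frac{v_{k-1}}{m_{k-1}}+s_k\,v_k+\frac{v_{k+1}}{m_k}=0 \qquad (1\le k\le n),
\]
valid thanks to \eqref{eq:assumption}.

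Next I would expand
\[
(AB)_{i,k}=\frac{A_{i,k-1}}{m_{k-1}}+s_k A_{i,k}+\frac{A_{i,k+1}}{m_k},
\]
with the obvious truncation when $k=1$ or $k=n$. For $i<k$ each entry $A_{i,j}$ with $j\in\{k-1,k,k+1\}$ equals $a_ib_j$, so the right-hand side is $a_i$ times the second coordinate of the key identity and therefore vanishes; for $i>k$ it is $b_i$ times the first coordinate, again vanishing. The boundary cases $k=1,n$ are automatic: the ``missing'' term would formally be $v_0/m_0$ or $v_{n+1}/m_n$, and the coordinates $a_0=0$, $b_{n+1}=0$ of $v_0=\begin{pmatrix}0\\1\end{pmatrix}$, $v_{n+1}=\begin{pmatrix}1\\0\end{pmatrix}$ kill precisely the relevant component of the key identity. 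Using $m_0=-a_1$ and $m_n=-b_n$, these same boundary forms also compute $(AB)_{1,1}=a_1\cdot(-1/m_0)=1$ and $(AB)_{n,n}=b_n\cdot(-1/m_n)=1$ directly.

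The only remaining case, and the main obstacle, is the interior diagonal $2\le k\le n-1$, $i=k$. Here $A_{k,k-1}=a_{k-1}b_k$ and $A_{k,k+1}=a_kb_{k+1}$ mix the two coordinates of the key identity, so the sum no longer factors through a single coordinate. The fix is to clear denominators, giving
\[
m_{k-1}m_k\,(AB)_{k,k}=a_{k-1}b_k m_k-a_kb_k\det(v_{k-1},v_{k+1})+a_kb_{k+1}m_{k-1},
\]
and then to substitute the scalar first-coordinate version $a_k\det(v_{k-1},v_{k+1})=a_{k-1}m_k+a_{k+1}m_{k-1}$ of the key identity into the middle term; after cancellation the right side collapses to $m_{k-1}(a_kb_{k+1}-a_{k+1}b_k)=m_{k-1}m_k$, so $(AB)_{k,k}=1$ and the verification is complete.
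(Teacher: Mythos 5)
Your proof is correct and takes essentially the same route as the paper's: both verify $AB=I$ entry by entry, and your Cramer identity $u_1\det(u_2,u_3)-u_2\det(u_1,u_3)+u_3\det(u_1,u_2)=0$ is just a clean packaging of the ``simple calculation'' the paper leaves to the reader. Your case analysis (off-diagonal entries via a single coordinate of the key identity, the interior diagonal via the cleared-denominator substitution, and the boundary entries via $a_0=0$, $b_{n+1}=0$, $m_0=-a_1$, $m_n=-b_n$) is complete and checks out.
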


\begin{proof}
By definition 
\begin{equation} \label{eq:cd}
\begin{split}
m_i&=a_ib_{i+1}-a_{i+1}b_i,\\
s_i&=-(a_{i-1}b_{i+1}-a_{i+1}b_{i-1})/(a_{i-1}b_i-a_ib_{i-1})(a_ib_{i+1}-a_{i+1}b_i).
\end{split}
\end{equation}
Let $B$ be the matrix in \eqref{eq:inverse_A}. We shall prove that the product $AB$ is the identity matrix by direct computation. Since $AB$ is a symmetric matrix, it suffices to check the $(i,j)$ entry $(AB)_{i,j}$ of $AB$ for $i\le j$. 

Case 1. The case where $i=j$. In this case
\begin{equation} \label{eq:i=j}
(AB)_{i,i}=a_{i-1}b_im_{i-1}^{-1}+a_ib_is_i+a_ib_{i+1}m_i^{-1}
\end{equation}
where we note that $a_0=0$ when $i=1$ and $b_{n+1}=0$ when $i=n$. 
Using \eqref{eq:cd}, the right hand side of \eqref{eq:i=j} is equal to
\[
\frac{a_{i-1}b_i}{a_{i-1}b_i-a_ib_{i-1}}-\frac{a_ib_i(a_{i-1}b_{i+1}-a_{i+1}b_{i-1})}{(a_{i-1}b_i-a_ib_{i-1})(a_ib_{i+1}-a_{i+1}b_i)}+\frac{a_{i}b_{i+1}}{a_{i}b_{i+1}-a_{i+1}b_{i}}
\]
and a simple calculation shows that this is equal to $1$. 

Case 2. The case where $i<j$. In this case
\begin{equation} \label{eq:i<j}
(AB)_{i,j}=a_ib_{j-1}m_{j-1}^{-1}+a_ib_js_j+a_ib_{j+1}m_j^{-1}
\end{equation} 
where we note that $b_{n+1}=0$ when $j=n$. 
Using \eqref{eq:cd}, the right hand side of \eqref{eq:i<j} is equal to
\[
\frac{a_{i}b_{j-1}}{a_{j-1}b_j-a_jb_{j-1}}-\frac{a_ib_j(a_{j-1}b_{j+1}-a_{j+1}b_{j-1})}{(a_{j-1}b_j-a_jb_{j-1})(a_jb_{j+1}-a_{j+1}b_j)}+\frac{a_{i}b_{j+1}}{a_{j}b_{j+1}-a_{j+1}b_{j}}
\]
and a simple calculation shows that this vanishes. 
\end{proof}

\begin{example} \label{exam:Cartan_matrix}
We take $n=4$ and 
\[
v_1=\begin{pmatrix} 1\\
4\end{pmatrix}, \ v_2=\begin{pmatrix} 2\\
3\end{pmatrix},\ v_3=\begin{pmatrix} 3\\
2\end{pmatrix},\ v_4=\begin{pmatrix} 4\\
1\end{pmatrix}
\]
Then the matrix $A$ in \eqref{eq:matrix_A} associated to this sequence of vectors and its inverse $A^{-1}$ are as follows:
\[
A=\begin{pmatrix} 
4&3&2&1\\
3&6&4&2\\
2&4&6&3\\
1&2&3&4\end{pmatrix}\qquad A^{-1}=\frac{1}{5}\begin{pmatrix}2&-1&0&0\\
-1&2&-1&0\\
0&-1&2&-1\\
0&0&-1&2\end{pmatrix} 
\]

In general, if we take $v_i=\frac{1}{\sqrt{n+1}}\begin{pmatrix}i\\
n+1-i\end{pmatrix}$ for $i=1,\dots,n$, then the sequence $v=(v_0,v_1,\dots,v_n,v_{n+1})$ lies on the upper half plane and rotates clockwise around the origin. Therefore, the matrix $A$ in \eqref{eq:matrix_A} associated to this sequence $v$ is positive definite by Corollary~\ref{coro:positive_definite} and $A^{-1}$ turns out to be the Cartan matrix of type $A_n$. (To the contrary, a formula to find the inverse of a Cartan matrix is given in \cite{lu-ti92} for any Lie type.)
\end{example}

We consider the configuration space 
\begin{equation} \label{eq:V}
V=\left\{\left(v_1=\begin{pmatrix}a_1\\
b_1\end{pmatrix},\dots,v_n=\begin{pmatrix}a_n\\
b_n\end{pmatrix}\right)\in (\R^2)^n\ \middle|\ a_1b_n\prod_{i=1}^{n-1}(a_ib_{i+1}-a_{i+1}b_i)\not=0\right\}
\end{equation}
and the space $W$ of nonsingular tridiagonal symmetric matrices of order $n$ with nonzero entries in the superdiagonal, i.e.
\begin{equation} \label{eq:matrix_B}
W=\left\{B=\begin{pmatrix} p_1& q_1& 0&0&\dots &0\\
q_1& p_2& q_2&0& \dots&0\\
0&q_2&p_3& q_3&\dots&0\\
\vdots&\ddots&\ddots&\ddots&\ddots&\vdots\\
0&\dots&0&q_{n-2}&p_{n-1}&q_{n-1}\\
0&\dots&0&0&q_{n-1}&p_n
\end{pmatrix}\ \middle| \ \begin{split} &\det B\not=0,\\
&q_i\not=0\ \text{$(1\le \forall i\le n-1)$}
\end{split}
\right\}
\end{equation}
%with $q_i\not=0$ for $1\le \forall i\le n-1$. 
The map sending $(v_1,\dots,v_n)\in V$ to $A^{-1}$ defines a map 
$
\Psi\colon V\to W
$
by Proposition~\ref{prop:inverse_A}. The space $V$ admits an action of $\R^*$ sending $\begin{pmatrix}a_i\\
b_i\end{pmatrix}$ to $\begin{pmatrix}ra_i\\
r^{-1}b_i\end{pmatrix}$ $(r\in \R^*)$ simultaneously for every $i$. The matrix $A$ in \eqref{eq:matrix_A} associated to an element of $V$ remains changed under this action, so the map $\Psi$ induces a map 
\[
\widehat{\Psi}\colon V/\R^*\to W. 
\]
The following proposition gives a characterization of nonsingular symmetric matrices of the form \eqref{eq:matrix_A} or a characterization of nonsingular tridiagonal symmetric matrices with nonzero entries in the superdiagonal. 

\begin{proposition} \label{prop:bijection}
The map $\widehat{\Psi}$ is bijective. 
\end{proposition}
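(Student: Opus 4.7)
The plan is to establish injectivity and surjectivity of $\widehat{\Psi}$ separately.

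\textbf{Injectivity.} Suppose $v,v'\in V$ satisfy $A(v)=A(v')$, so $a_ib_j=a_i'b_j'$ for every $i\le j$. The non-degeneracy built into $V$ forces $a_1\ne 0$ and $b_n\ne 0$, since $\det(v_0,v_1)=-a_1$ and $\det(v_n,v_{n+1})=-b_n$; similarly $a_1',b_n'\ne 0$. Setting $r:=a_1'/a_1\in\R^*$, the first-row relations $a_1b_j=a_1'b_j'$ give $b_j'=b_j/r$ for every $j$, and then the last-column relations $a_ib_n=a_i'b_n'$ give $a_i'=ra_i$ for every $i$. Hence $v'=r\cdot v$ under the $\R^*$-action.

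\textbf{Surjectivity.} Given $B\in W$ with diagonal $p_1,\dots,p_n$ and superdiagonal $q_1,\dots,q_{n-1}$, I construct $v\in V$ with $\Psi(v)=B$. Set $m_i:=1/q_i$ for $1\le i\le n-1$ and $s_i:=p_i$ for $1\le i\le n$. Reading \eqref{eq:cd} backwards, the equations $\det(v_i,v_{i+1})=m_i$ and $\det(v_{i-1},v_{i+1})=-s_im_{i-1}m_i$ express $v_{i+1}$ in the basis $\{v_{i-1},v_i\}$ as the three-term linear recursion
\[
v_{i+1}=-\frac{m_i}{m_{i-1}}\,v_{i-1}-s_im_i\,v_i.
\]
Normalize via the $\R^*$-action so that $a_1=1$, whence $m_0=-1$; take $v_0=(0,1)$ and $v_1=(1,b_1)$ with $b_1$ a free parameter. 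Applying the recursion for $i=1,\dots,n-1$ produces $v_2,\dots,v_n$; by linearity each $a_i$ is independent of $b_1$, while each $b_i$ is affine in $b_1$. By construction the entries of $\Psi(v)$ already agree with those of $B$ except possibly the last diagonal entry, and imposing $s_n=p_n$ (using $m_n=-b_n$ in the formula of Proposition~\ref{prop:inverse_A}) reduces to the single linear equation $b_{n-1}+p_nm_{n-1}b_n=0$ in $b_1$.

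\textbf{The main obstacle} is to verify that this equation has a unique solution for $b_1$ yielding a non-degenerate configuration (i.e.\ $b_n\ne 0$, the other non-degeneracies $m_i\ne 0$ and $a_1\ne 0$ being automatic). This amounts to showing the coefficient of $b_1$ is nonzero, which I expect to carry out by a direct computation identifying this coefficient, up to an explicit nonzero factor, with $\det B$. An arguably cleaner alternative is to bypass the recursion entirely and exhibit an explicit section $W\to V$ coming from the classical formula for the inverse of a symmetric tridiagonal matrix in terms of its leading and trailing principal minors $\theta_k,\phi_k$: up to sign and scaling, one sets $a_i:=\theta_{i-1}$ and $b_j:=\phi_{j+1}/\det B$, and the standard identity $\theta_i\phi_{i+1}-q_i^2\theta_{i-1}\phi_{i+2}=\det B$ for tridiagonal matrices then simultaneously yields $m_i=1/q_i$ and the required non-degeneracy, completing the verification that $\Psi(v)=B$.
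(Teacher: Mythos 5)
Your injectivity argument is essentially the paper's: the paper normalizes $a_1=1$ and recovers the $b_j$ from the first row and the $a_i$ from the last column of $A=\widehat{\Psi}(v)^{-1}$; your scaling factor $r=a_1'/a_1$ is the same observation. For surjectivity your primary route is genuinely different. The paper simply writes down the candidate $a_i=c_{in}/c_{1n}$, $b_j=c_{1j}/\det B$ from the cofactors of $B$ and verifies the rank-one identity $c_{in}c_{1j}=c_{1n}c_{ij}$ via the product formula $c_{ij}=(-1)^{i+j}\Delta_{i-1}\nabla^{n-j}\prod_{\ell=i}^{j-1}q_\ell$ for cofactors of a tridiagonal matrix; non-degeneracy of the resulting $v$ is then automatic because $A=B^{-1}$ is nonsingular and Lemma~\ref{lemm:determinant_A} applies. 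Note that your ``cleaner alternative'' via leading and trailing principal minors is exactly this argument in disguise, so the new content of your proposal is the three-term recursion, which is correct: given $\det(v_{i-1},v_i)=m_{i-1}\neq 0$, the two conditions $\det(v_i,v_{i+1})=m_i$ and $\det(v_{i-1},v_{i+1})=-s_im_{i-1}m_i$ do force $v_{i+1}=-\tfrac{m_i}{m_{i-1}}v_{i-1}-s_im_iv_i$, and this trades the cofactor identity for one scalar computation.

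However, as written the surjectivity proof stops precisely at the step you flag, and ``I expect to carry out'' is not a proof. The computation does close: writing $v_j=u_j+b_1w_j$, the $w_j$ satisfy the same recursion with $w_0=0$, $w_1=v_0$, hence $w_j=\lambda_j v_0$ with $\lambda_j=(-1)^{j-1}\theta_{j-1}\big/\prod_{\ell=1}^{j-1}q_\ell$, where $\theta_k$ denotes the leading principal $k\times k$ minor of $B$. The $b_1$-coefficient of $b_{n-1}+p_nm_{n-1}b_n$ is then
\[
\lambda_{n-1}+p_nm_{n-1}\lambda_n=\frac{(-1)^{n-1}\bigl(p_n\theta_{n-1}-q_{n-1}^2\theta_{n-2}\bigr)}{q_{n-1}\prod_{\ell=1}^{n-1}q_\ell}=\frac{(-1)^{n-1}\det B}{q_{n-1}\prod_{\ell=1}^{n-1}q_\ell}\neq 0,
\]
confirming your guess. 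You should also record why $b_n\neq 0$ at the unique solution: if $b_n=0$, the imposed relation forces $b_{n-1}=0$ as well, contradicting $\det(v_{n-1},v_n)=1/q_{n-1}\neq 0$; the remaining non-degeneracies are built into the construction. With these two additions your argument is complete and is a legitimate alternative to the paper's explicit cofactor construction.
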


%\begin{remark}
%In fact, both $V/\R^*$ and $W$ are smooth manifolds and $\hat{\Psi}$ is a diffeomorphism. 
%\end{remark}

\begin{proof}
(Injectivity). We may assume $a_1=1$ for $v\in V$. Since $A^{-1}$ determines $A$, one can see that the $b_1,\dots,b_n$ will be determined from $A^{-1}$ by looking at the first row of $A$. Similarly, since $b_n\not=0$, one can see that $a_2,\dots,a_n$ will also be determined from $A^{-1}$ by looking at the last column of $A$. This shows the injectivity of $\widehat{\Psi}$. 

(Surjectivity). Let $B$ be a matrix in $W$ and let $c_{ij}$ be the cofactor of the $(i,j)$ entry of $B$ (note that $c_{ij}=c_{ji}$ since $B$ is symmetric). Then $B^{-1}=\frac{1}{\det B}(c_{ij})$. What we have to show is the existence of real numbers $a_i,b_j$ $(1\le i,j\le n)$ such that $a_ib_j=\frac{1}{\det B}c_{ij}$ for $i\le j$. We take 
\[
a_i=c_{in}/c_{1n},\quad b_j=c_{1j}/\det B.
\]
Since $a_ib_j=c_{in}c_{1j}/c_{1n}\det B$, it suffices to show that 
\begin{equation} \label{eq:cofactor_equation}
c_{in}c_{1j}=c_{1n}c_{ij}\quad \text{for $i\le j$}. 
\end{equation}
We denote the upper left (resp. lower right) $k\times k$ minor of $B$ by $\Delta_k$ (resp. $\nabla_k$). Then one can easily find that 
\[
c_{ij}=(-1)^{i+j}\Delta_{i-1}\nabla^{n-j}\prod_{\ell=i}^{j-1}q_\ell,
\] 
where we understand $\Delta_0=\nabla^{0}=1$ and $\prod_{\ell=i}^{j-1}q_\ell=1$ when $i=j$. Therefore
\[
\begin{split}
c_{in}c_{1j}&=(-1)^{i+n}\Delta_{i-1}\prod_{\ell=i}^{n-1}q_\ell\cdot (-1)^{1+j}\nabla^{n-j}\prod_{\ell=1}^{j-1}q_\ell\\
c_{1n}c_{ij}&=(-1)^{1+n}\prod_{\ell=1}^{n-1}q_\ell\cdot (-1)^{i+j}\Delta_{i-1}\nabla^{n-j}\prod_{\ell=i}^{j-1}q_\ell,\\
\end{split}
\]
so the identity in \eqref{eq:cofactor_equation} holds. 
\end{proof}

In fact, both $V/\R^*$ and $W$ are topological spaces with a natural topology and $\widehat{\Psi}$ is a homeomorphism. The signs ${\rm sgn}(q_i)$ and $\Sign(B)$ are locally constant continuous functions on $W$. Through the homeomorphism $\widehat{\Psi}$, ${\rm sgn}(q_i)={\rm sgn}(\det(v_i,v_{i+1}))$ and $\Sign(B)=\Sign(A)$. Therefore, the map defined by 
\[
\Phi((v_1,\dots,v_n))= \Big({\rm sgn}(\det(v_1,v_2)),\dots,{\rm sgn}(\det(v_{n-1},v_n)),\Sign(A)\Big)
\]
for $(v_1,\dots,v_n)\in V$ and $A$ in \eqref{eq:matrix_A} associated to $(v_1,\dots,v_n)$ induces a map 
\[
\widehat{\Phi}\colon %\pi_0(V/\R^*)=
\{\text{connected components of $V/\R^*$}\} \to \{\pm 1\}^{n-1}\times \{n-2k\mid 0\le k\le n\}
\]
where we note that $\Sign(A)$ takes values $n-2k$ $(0\le k\le n)$ (see Corollary~\ref{coro:positive_definite}). 

\begin{proposition}
The map $\widehat{\Phi}$ is bijective. Therefore, $V/\R^*$ and $W$ have $2^{n-1}(n+1)$ many connected components. 
\end{proposition}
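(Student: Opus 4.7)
The plan is to transport the question to the matrix side via $\widehat{\Psi}$, reduce by a diagonal $\pm 1$ congruence to the stratum where every super-diagonal entry is positive, and count connected components there using the classical inverse spectral theorem for Jacobi matrices. Under the homeomorphism $\widehat{\Psi}\colon V/\R^* \to W$ from Proposition~\ref{prop:bijection}, the two components of $\widehat{\Phi}$ correspond to matrix invariants: Proposition~\ref{prop:inverse_A} gives ${\rm sgn}(\det(v_i,v_{i+1})) = {\rm sgn}(m_i) = {\rm sgn}(q_i)$ (since $q_i = m_i^{-1}$), while $\Sign(A) = \Sign(A^{-1}) = \Sign(B)$ because a nonsingular symmetric matrix shares its signature with its inverse. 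So $\widehat{\Phi}$ becomes the map $B \mapsto ({\rm sgn}(q_1),\dots,{\rm sgn}(q_{n-1}),\Sign(B))$ on $\pi_0(W)$, and the task is to show this is a bijection onto $\{\pm 1\}^{n-1} \times \{n-2k \mid 0 \le k \le n\}$.

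Next, since each ${\rm sgn}(q_i)$ is locally constant and nowhere zero on $W$, there is a clopen decomposition $W = \bigsqcup_{\epsilon \in \{\pm 1\}^{n-1}} W^\epsilon$ with $W^\epsilon = \{B \in W : {\rm sgn}(q_i) = \epsilon_i\}$. For any $\epsilon$, setting $\delta_1 = 1$ and recursively $\delta_{i+1} = \epsilon_i \delta_i$ produces $\delta \in \{\pm 1\}^n$ with $\delta_i\delta_{i+1} = \epsilon_i$, and then the congruence $B \mapsto DBD$ for $D = \mathrm{diag}(\delta)$ defines a signature-preserving homeomorphism from $W^+$ (the stratum where every $q_i > 0$) onto $W^\epsilon$. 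Consequently it suffices to show that $W^+$ has exactly $n+1$ connected components, one for each signature value $n - 2k$ with $0 \le k \le n$.

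For this key step, I invoke the classical inverse spectral parametrization of Jacobi matrices: every tridiagonal symmetric matrix of order $n$ with strictly positive super-diagonal has $n$ distinct real eigenvalues $\lambda_1 < \cdots < \lambda_n$, and the assignment sending such a matrix to $(\lambda,w)$ --- where $w_i>0$ is the first coordinate of the $i$-th normalized eigenvector and $\sum_i w_i^2 = 1$ --- is a homeomorphism onto $\{\lambda_1 < \cdots < \lambda_n\} \times \{w \in \R^n_{>0} : \sum_i w_i^2 = 1\}$. The subset $W^+$ corresponds to $\{\lambda_i \ne 0 \text{ for every } i\}$, which splits by the number $k$ of negative $\lambda_i$'s into $n+1$ open pieces; each piece is the product of a convex open chamber in $\R^n$ with the open positive orthant of $S^{n-1}$, hence connected, and the signature of any of its matrices equals $n-2k$. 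This furnishes the required $n+1$ components.

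The main obstacle is this last step, which rests on the (classical but nontrivial) Jacobi inverse spectral theorem. A more self-contained alternative would argue inductively via the determinant recurrence $\Delta_k = p_k \Delta_{k-1} - q_{k-1}^2 \Delta_{k-2}$ and Cauchy interlacing: the projection from $W^+$ (order $n$) to its order-$(n-1)$ analogue has two-piece fibers carrying signatures that differ by $\pm 1$ from the signature of the truncated matrix, and one would then need to verify that the pieces of the same signature but sitting over different components of the smaller stratum coalesce in $W^+$ across the discriminant $\{\Delta_{n-1}=0\}$; this is elementary but appreciably more bookkeeping.
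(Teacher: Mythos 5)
Your proof is correct, but it takes a genuinely different route from the paper's. The paper stays on the vector side: it uses the $\{\pm 1\}^n/\{\pm 1\}$ sign action on $V/\R^*$ to reduce to sequences with all $\det(v_i,v_{i+1})>0$, classifies these counterclockwise-rotating sequences into types ${\rm I}_k$ and ${\rm II}_k$ according to ${\rm sgn}(\det(v_n,v_{n+1}))$ and the rotation number $R(v)$, argues by an explicit (if somewhat informal) deformation that each type is connected, and then computes $\Sign(A)=4R(v)-S(v)$ via Theorem~\ref{theo:main} to see that the signature separates the types. You instead push everything through $\widehat{\Psi}$ to the matrix space $W$, reduce by the diagonal congruence $B\mapsto DBD$ to the Jacobi stratum $W^+$ (the exact analogue of the paper's sign action), and then invoke the classical inverse spectral parametrization of Jacobi matrices to identify $W^+$ with (open chamber of simple spectra with no zero eigenvalue) $\times$ (positive orthant of $S^{n-1}$), whose $n+1$ pieces are visibly connected by convexity. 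Your approach buys a rigorous and clean connectivity argument (the paper's ``it is not difficult to see that $v$ can be deformed to $v'$'' is left to the reader) and entirely avoids Theorem~\ref{theo:main}, at the cost of importing a nontrivial external theorem that the paper does not need; the paper's approach is self-contained relative to its own main result and keeps the geometric picture of rotating vector sequences, which is what motivates the types ${\rm I}_k$, ${\rm II}_k$ in the first place. Your correspondences ${\rm sgn}(q_i)={\rm sgn}(\det(v_i,v_{i+1}))$ and $\Sign(B)=\Sign(A)$ (congruence of $A$ and $A^{-1}$) are exactly the ones the paper records just before the proposition, so the translation step is sound.
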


\begin{proof}
Sending $(v_1,\dots,v_n)$ to $(\pm v_1,\dots,\pm v_n)$ defines an action of $\{\pm 1\}^{n}/\{\pm 1\}$ on $V/\R^*$, which changes ${\rm sgn}(\det(v_i,v_{i+1}))$'s but does not change $\Sign(A)$. Therefore, it suffices to show that the connected components of $V/\R^*$ with ${\rm sgn}(\det(v_i,v_{i+1}))=1$ for any $1\le i\le n$ are distinguished by $\Sign(A)$. We may further assume that ${\rm sgn}(\det(v_0,v_1))=1$ because if ${\rm sgn}(\det(v_0,v_1))=-1$, then we may consider $(-v_1,\dots,-v_n)$ instead of $(v_1,\dots,v_n)$ since they define the same element in $V/\R^*$. 

The assumption that ${\rm sgn}(\det(v_i,v_{i+1}))=1$ for any $0\le i\le n$ means that the vectors $v_0,v_1,\dots,v_n$ rotate counterclockwise around the origin. Moreover, we have $\det(v_n,v_{n+1})\not=0$. Therefore, the following types occur for the sequence $v=(v_0,v_1,\dots,v_{n+1})$ according to the sign of $\det(v_n,v_{n+1})$ and the rotation number $R(v)$ (see Figures~\ref{fig:n=3} and~\ref{fig:n=4} below):

\smallskip
Type ${\rm I}_k$. $\det(v_n,v_{n+1})<0$ and $R(v)=k$ where $0\le k\le [n/2]$, 

Type ${\rm II}_k$. $\det(v_n,v_{n+1})>0$ and $R(v)=k$ where $1\le k\le [(n+1)/2]$. 

\smallskip

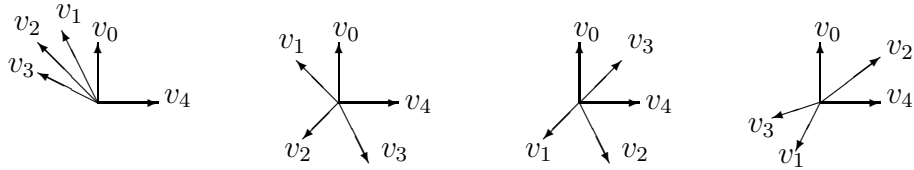
\begin{figure}[hbtp]
\setlength{\unitlength}{0.8cm}
\begin{picture}(14,2.8)
%\put(1,2){\circle*{0.1}}
\put(1,1){\vector(1,0){1}}
\put(1,1){\vector(0,1){1}}
\put(1,1){\vector(-1,2){0.6}}
\put(1,1){\vector(-1,1){1}}
\put(1,1){\vector(-2,1){1}}
\put(0.9,2.1){$v_0$}
\put(0.3,2.4){$v_1$}
\put(-0.4,2.2){$v_2$}
\put(-0.5,1.5){$v_3$}
\put(2.1,1){$v_4$}

\put(5,1){\vector(1,0){1}}
\put(5,1){\vector(0,1){1}}
\put(5,1){\vector(-1,1){0.7}}
\put(5,1){\vector(-1,-1){0.6}}
\put(5,1){\vector(1,-2){0.5}}
\put(4.9,2.1){$v_0$}
\put(4,1.9){$v_1$}
\put(4.1,0.1){$v_2$}
\put(5.7,0.1){$v_3$}
\put(6.1,0.9){$v_4$}

\put(9,1){\vector(1,0){1}}
\put(9,1){\vector(0,1){1}}
\put(9,1){\vector(1,1){0.7}}
\put(9,1){\vector(-1,-1){0.6}}
\put(9,1){\vector(1,-2){0.5}}
\put(8.9,2.1){$v_0$}
\put(9.8,1.9){$v_3$}
\put(8.1,0.1){$v_1$}
\put(9.7,0.1){$v_2$}
\put(10.1,0.9){$v_4$}

\put(13,1){\vector(1,0){1}}
\put(13,1){\vector(0,1){1}}
\put(13,1){\vector(4,3){1}}
\put(13,1){\vector(-1,-2){0.4}}
\put(13,1){\vector(-3,-1){0.8}}
\put(12.9,2.1){$v_0$}
\put(14.1,1.8){$v_2$}
\put(12.3,0){$v_1$}
\put(11.8,0.5){$v_3$}
\put(14.1,1){$v_4$}
\end{picture}
\caption{The case $n=3$. Type ${\rm I}_0$, ${\rm II}_1$, ${\rm I}_1$, ${\rm II}_2$ from the left.} \label{fig:n=3}
\end{figure}

\begin{figure}[hbtp]
\setlength{\unitlength}{0.8cm}
\begin{picture}(15,2.2)
%\put(1,2){\circle*{0.1}}
\put(0,1){\vector(1,0){1}}
\put(0,1){\vector(0,1){1}}
\put(0,1){\vector(-1,2){0.6}}
\put(0,1){\vector(-1,1){1}}
\put(0,1){\vector(-2,1){1}}
\put(0,1){\vector(-4,1){1.3}}
\put(-0.1,2.1){$v_0$}
\put(-0.7,2.4){$v_1$}
\put(-1.4,2.2){$v_2$}
\put(-1.6,1.6){$v_3$}
\put(-2,1.2){$v_4$}
\put(1.1,1){$v_5$}

\put(4,1){\vector(1,0){1}}
\put(4,1){\vector(0,1){1}}
\put(4,1){\vector(-1,1){0.7}}
\put(4,1){\vector(-1,-1){0.6}}
\put(4,1){\vector(1,-2){0.5}}
\put(4,1){\vector(1,-1){0.6}}
\put(3.9,2.1){$v_0$}
\put(3,1.9){$v_1$}
\put(3.1,0.1){$v_2$}
\put(4.6,-0.3){$v_3$}
\put(5,0.1){$v_4$}
\put(5.1,0.9){$v_5$}

\put(8,1){\vector(1,0){1}}
\put(8,1){\vector(0,1){1}}
\put(8,1){\vector(-1,1){0.7}}
\put(8,1){\vector(-1,-1){0.6}}
\put(8,1){\vector(1,-2){0.5}}
\put(8,1){\vector(1,1){0.6}}
\put(7.9,2.1){$v_0$}
\put(7,1.9){$v_1$}
\put(7.1,0.1){$v_2$}
\put(8.6,-0.3){$v_3$}
\put(8.7,1.7){$v_4$}
\put(9.1,0.9){$v_5$}

\put(12,1){\vector(1,0){1}}
\put(12,1){\vector(0,1){1}}
\put(12,1){\vector(-2,-1){1}}
\put(12,1){\vector(-1,-2){0.4}}
\put(12,1){\vector(1,-2){0.5}}
\put(12,1){\vector(1,1){0.6}}
\put(11.9,2.1){$v_0$}
\put(11.2,-0.1){$v_1$}
\put(12.6,-0.3){$v_2$}
\put(12.7,1.7){$v_3$}
\put(10.4,0.4){$v_4$}
\put(13.1,0.9){$v_5$}

\put(16,1){\vector(1,0){1}}
\put(16,1){\vector(0,1){1}}
\put(16,1){\vector(1,1){0.7}}
\put(16,1){\vector(-1,-2){0.4}}
\put(16,1){\vector(4,1){1}}
\put(16,1){\vector(-2,-1){0.8}}
\put(15.9,2.1){$v_0$}
\put(16.8,1.9){$v_2$}
\put(15.2,-0.2){$v_1$}
\put(14.6,0.3){$v_3$}
\put(17.1,1.3){$v_4$}
\put(17.1,0.7){$v_5$}

\end{picture}
\caption{The case $n=4$. Type ${\rm I}_0$, ${\rm II}_1$, ${\rm I}_1$, ${\rm II}_2$, ${\rm I}_2$ from the left.} \label{fig:n=4}
\end{figure}
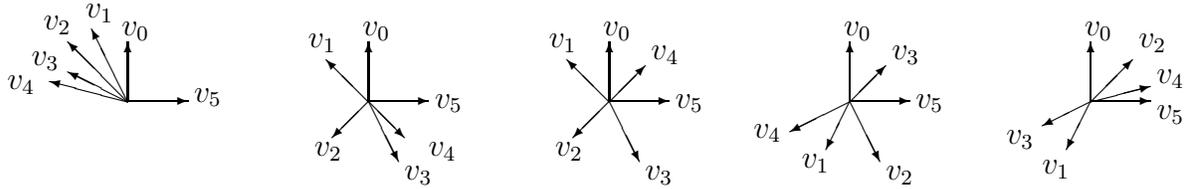

If $v$ and $v'$ are vector sequences of the same type, then it is not difficult to see that $v$ can be deformed to $v'$ continuously in the space of $V$ so that $v$ and $v'$ are in the same connected component of $V/\R^*$. In Type ${\rm I}_k$ we have $S(v)=n$ and hence $\Sign(A)=4k-n$ by Theorem~\ref{theo:main}. Similarly, in Type ${\rm II}_k$ we have $S(v)=n+2$ and hence $\Sign(A)=4k-n-2$. These show that $\Sign(A)$ distinguishes the types, proving the proposition. 
\end{proof}

\begin{remark}
We can consider a complex analogue of the configuration space \eqref{eq:V}, that is 
\[
V_\C=\left\{v=\left(\begin{pmatrix}a_1\\
b_1\end{pmatrix},\dots,\begin{pmatrix}a_n\\
b_n\end{pmatrix}\right)\in (\C^2)^n\ \middle|\ a_ib_i\in\R\ \text{for $\forall i$},\  a_1b_n\prod_{i=1}^{n-1}(\bar{a}_i\bar{b}_{i+1}-a_{i+1}b_i)\not=0\right\}
\]
and associate a Hermitian matrix $A$ like in \eqref{eq:matrix_A} to $v\in V_\C$ (to be precise, we take complex conjugate in the lower triangular part of $A$). The inverse of $A$ is a tridiagonal Hermitian matrix with nonzero superdiagonal. We denote by $W_\C$ the set of all nonsingular tridiagonal Hermitian matrices with nonzero superdiagonal.  Then the map sending $v\in V_\C$ to $A^{-1}\in W_\C$ induces a homeomorphism from $V_\C/\C^*$ to $W_\C$, where the action of $\C^*$ on $V_\C$ is given by $\begin{pmatrix}a_i\\
b_i\end{pmatrix}\to \begin{pmatrix}ca_i\\
c^{-1}b_i\end{pmatrix}$ $(c\in \C^*)$ for every $i$.  Entries in the superdiagonal of elements of $W_\C$ are in $\C^*$ and $\C^*$ is connected, so it turns out that the connected components of $V_\C/\C^*$ or $W_\C$ are distinguished by $\Sign(A)$ and they have $n+1$ many connected components.  
\end{remark}

\section{A geometric relation between $A$ and $A^{-1}$} \label{sect:4}

Let $X$ be a connected Poincar\'e duality space of dimension $4d$, i.e. $X$ is a connected topological space with a class $[X]\in H^{4d}(X;\R)$ such that $H^*(X;\R)$ is a finite dimensional vector space and the map 
\[
\cap [X]\colon H^q(X;\R)\to H_{4d-q}(X;\R) 
\]
is an isomorphism for any $q$. Then the pairing 
\[
H^{2d}(X;\R)\times H^{2d}(X;\R) \to \R \qquad (\beta,\gamma)\to \langle \beta\cup\gamma,[X]\rangle
\]
is a nondegenerate symmetric bilinear form. 

Let $n=\dim_\R H_{2d}(X;\R)$ and choose a basis $\alpha_1,\dots,\alpha_n$ of $H_{2d}(X;\R)$. Then there are two natural bases of $H^{2d}(X;\R)$. One is the Kronecker dual basis $\alpha_1^*,\dots,\alpha_n^*$, that is, $\langle \alpha_i^*,\alpha_j\rangle=\delta_{ij}$. The other is the Poincar\'e dual basis $\alpha_1^\vee,\dots,\alpha_n^\vee$, that is, $\alpha_i^\vee\cap [X]=\alpha_i$. 

\begin{lemma} \label{lemm:relation}
The inverse of the symmetric matrix with $\langle \alpha_i^*\cup\alpha_j^*,[X]\rangle$ as $(i,j)$ entry is the symmetric matrix with $\langle \alpha_i^\vee\cup\alpha_j^\vee,[X]\rangle$ as $(i,j)$ entry. 
\end{lemma}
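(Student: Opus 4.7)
The plan is to relate the two matrices through the change-of-basis matrix between the Kronecker dual basis $\{\alpha_i^*\}$ and the Poincaré dual basis $\{\alpha_i^\vee\}$ of $H^{2d}(X;\R)$. Write $M=(\langle\alpha_i^*\cup\alpha_j^*,[X]\rangle)$ and $N=(\langle\alpha_i^\vee\cup\alpha_j^\vee,[X]\rangle)$; the goal is to verify $MN=I$, which is purely a linear-algebra identity once the cup–cap adjunction is unpacked.

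First I would expand the Poincaré dual basis in the Kronecker dual basis by writing $\alpha_j^\vee=\sum_i T_{ij}\alpha_i^*$ and then pairing with $\alpha_k$; using $\langle\alpha_i^*,\alpha_k\rangle=\delta_{ik}$ gives $T_{kj}=\langle\alpha_j^\vee,\alpha_k\rangle$. Next I would apply the standard adjunction $\langle\beta\cup\gamma,[X]\rangle=\langle\beta,\gamma\cap[X]\rangle$ with $\beta=\alpha_j^\vee$, $\gamma=\alpha_k^\vee$, together with the defining property $\alpha_k^\vee\cap[X]=\alpha_k$, to obtain $N_{jk}=\langle\alpha_j^\vee,\alpha_k\rangle$. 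Combining the two calculations yields $N_{jk}=T_{kj}$, i.e.\ $N=T^{T}$; since $N$ is manifestly symmetric, the transition matrix from $\{\alpha_i^*\}$ to $\{\alpha_j^\vee\}$ is in fact $T=N$ itself.

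The last step is to invoke the change-of-basis formula for bilinear forms. The symmetric bilinear pairing $(\beta,\gamma)\mapsto\langle\beta\cup\gamma,[X]\rangle$ is represented by the matrix $M$ in the basis $\{\alpha_i^*\}$ and by the matrix $N$ in the basis $\{\alpha_j^\vee\}$, so the transformation law gives $N=T^{T}MT=NMN$. Nondegeneracy of the pairing (asserted in the paragraph preceding the lemma) forces $N$ to be invertible, and cancelling one factor of $N$ gives $MN=I$, which is the claim.

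I do not expect a serious obstacle here; the only nontrivial input is the identification $T=N$, which is really just the statement that Poincaré duality intertwines the cup-product pairing with the evaluation pairing. The step that needs the most care is bookkeeping — getting the transpose on the correct side of the change-of-basis formula and invoking symmetry of $N$ at the right moment to conclude $T=N$ rather than merely $T=N^{T}$.
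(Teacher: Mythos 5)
Your argument is correct and follows essentially the same route as the paper: both identify the transition matrix between the Kronecker-dual and Poincar\'e-dual bases with the intersection matrix $N=(\langle\alpha_i^\vee\cup\alpha_j^\vee,[X]\rangle)$ via the cup--cap adjunction, and then apply the change-of-basis law for the cup-product form to conclude $MN=I$. The only cosmetic difference is that the paper works with the inverse transition matrix $P$ (so that $B=P^{-1}$ and $A=PBP^{T}=P$), whereas you work with $T=N$ directly and cancel a factor of $N$ from $N=NMN$.
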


\begin{proof}
We abbreviate $\langle \beta\cup\gamma,[X]\rangle$ as $\beta\gamma$ for $\beta,\gamma\in H^{2d}(X,\R)$. We denote the symmetric matrix $(\alpha_i^*\alpha_j^*)$ (resp. $(\alpha_i^\vee\alpha_j^\vee)$) by $A$ (resp. $B$). What we have to prove is $B=A^{-1}$. 

Since $\alpha_1^*,\dots,\alpha_n^*$ and $\alpha_1^\vee,\dots,\alpha_n^\vee$ are both bases of $H^{2d}(X;\R)$, one has a nonsingular matrix $P$ of order $n$ such that
\begin{equation} \label{eq:matrix_P}
\begin{pmatrix} \alpha_1^*\\
\vdots\\
\alpha_n^*\end{pmatrix}=P\begin{pmatrix} \alpha_1^\vee\\
\vdots\\
\alpha_n^\vee\end{pmatrix}
\end{equation}
Therefore
\begin{equation*} \label{eq:dual}
%\begin{split}
\begin{pmatrix} \alpha_1^*\\
\vdots\\
\alpha_n^*\end{pmatrix}\begin{pmatrix}\alpha_1,\dots,\alpha_n\end{pmatrix}=P\begin{pmatrix} \alpha_1^\vee\\
\vdots\\
\alpha_n^\vee\end{pmatrix}\begin{pmatrix}\alpha_1,\dots,\alpha_n\end{pmatrix}
=P\begin{pmatrix} \langle\alpha_1^\vee,\alpha_1\rangle&\dots&\langle\alpha_1^\vee,\alpha_n\rangle\\
\vdots&\vdots&\vdots\\
\langle\alpha_n^\vee,\alpha_1\rangle&\dots&\langle\alpha_n^\vee,\alpha_n\rangle\end{pmatrix}.
%\end{split}
\end{equation*}
Here the left hand side is the identity matrix while the last matrix above is $B$ because 
\[
\langle \alpha_i^\vee,\alpha_j\rangle=\langle \alpha_i^\vee,\alpha_j^\vee\cap [X]\rangle =\langle\alpha_i^\vee\cup\alpha_j^\vee,[X]\rangle. 
\]
Therefore, $B=P^{-1}$ and since $B^T=B$ we have $P^T=P$. 

Similarly, it follows from \eqref{eq:matrix_P} that 
\[
A=\begin{pmatrix} \alpha_1^*\\
\vdots\\
\alpha_n^*\end{pmatrix}\begin{pmatrix}\alpha_1^*,\dots,\alpha_n^*\end{pmatrix}=P\begin{pmatrix} \alpha_1^\vee\\
\vdots\\
\alpha_n^\vee\end{pmatrix}\begin{pmatrix}\alpha_1^\vee,\dots,\alpha_n^\vee\end{pmatrix}P^T=PBP^T=P
\]
where the last identity is because $B=P^{-1}$ and $P^T=P$. Thus $P=A$ and hence $B=A^{-1}$ because $B=P^{-1}$ as observed above. 
\end{proof}

\section{A geometric realization of the matrix $A$} \label{sect:5}

When $v_i$'s in \eqref{eq:vi} are (primitive) integer vectors and the sequence $v=(v_0,v_1,\dots,v_n,v_{n+1})$ rotates around the origin once counterclockwise, the $v$ associates a two dimensional complete simplicial fan by taking cones spanned by consecutive vectors and hence the $v$ associates a compact toric orbifold $X$ of complex dimension two. Let $\alpha_i$ be an element of $H_2(X;\R)$ defined by the divisor $X_i$ corresponding to $v_i$ for $1\le i\le n$. Then $\alpha_1,\dots,\alpha_n$ form a basis of $H_2(X;\R)$ and it is proved in \cite{fu-so-so23, fu-so-so24} that the matrix $A$ in \eqref{eq:matrix_A} is $(\langle \alpha_i^*\cup\alpha_j^*,[X]\rangle)$ while $A^{-1}$ is the matrix $(\langle \alpha_i^\vee\cup\alpha_j^\vee,[X]\rangle)$ which is the intersection matrix of the divisors $X_1,\dots,X_n$. As is easily observed, $X_i$ and $X_j$ do not intersect when $|i-j|\ge 2$. Therefore, $\langle\alpha_i^\vee\cup\alpha_j^\vee,[X]\rangle=0$ when $|i-j|\ge 2$ and hence $A^{-1}$ is tridiagonal. 

Suppose that $v_1,\dots,v_n$ are (not necessarily primitive) integer vectors with $\det A\not=0$, where the sequence $v=(v_0, v_1,\dots,v_n,v_{n+1})$ may rotate around the origin many times and may go back and forth. We associate an omnioriented quasitoric orbifold $X$ to the $v$ as follows. 
Let $Q$ be an $(n+2)$-gon given by 
\[
Q=\{ \x\in \R^2\mid (\n_i,\x)+d_i\ge 0\quad (i=0,1\dots,n+1)\} 
\]
with nonzero vectors $\n_i\in\R^2$ and $d_i\in\R$, where $(\ ,\ )$ denotes the standard scalar product on $\R^2$. Then the moment-angle manifold $\mathcal{Z}_Q$ is defined as 
\[
\mathcal{Z}_Q=\{(\x,\z)\in \R^2\times \C^{n+2}\mid (\n_i,\x)+d_i=|z_i|^2\quad (i=0,1,\dots,n+1)\} 
\]
where $\z=(z_0,z_1,\dots,z_{n+1})$. It is an orientable connected closed smooth manifold of dimension $n+4$ (\cite[Chapter 6]{bu-pa15}). The $\mathcal{Z}_Q$ has a natural action of $(S^1)^{n+2}\subset (\C^*)^{n+2}$, that is coordinatewise multiplication on $\C^{n+2}$. The sequence $v$ associates a surjective homomorphism $\mathcal{V}\colon (S^1)^{n+2}\to (S^1)^2=T$ defined by 
\[
\begin{split}
\mathcal{V}(h_0,h_1,\dots,h_{n+1})&=(h_0^{a_0}h_1^{a_1}\cdots h_n^{a_n}h_{n+1}^{a_{n+1}},\ h_0^{b_0}h_1^{b_1}\cdots h_n^{b_n}h_{n+1}^{b_{n+1}})\\
&=(h_1^{a_1}\cdots h_n^{a_n}h_{n+1},\ h_0h_1^{b_1}\cdots h_n^{b_n})
\end{split}
\]
where $(a_0,b_0)=(0,1)$ and $(a_{n+1},b_{n+1})=(1,0)$ as before. The kernel of $\mathcal{V}$ is an $n$-dimensional subtorus of $(S^1)^{n+2}$ and its action on $\mathcal{Z}_Q$ is almost free. Our quasitoric orbifold $X$ is $\mathcal{Z}_Q/\ker\mathcal{V}$ with the induced action of $(S^1)^{n+2}/\ker\mathcal{V}=T$, where the last identification is through $\mathcal{V}$. 

Let $S_i$ denote the $i$-th factor of $(S^1)^{n+2}$ for $i=0,1,\dots,n+1$. 
%The characteristic suborbifold $X_i$ corresponding to $v_i$ is $(\mathcal{Z}_Q\cap \{z_i=0\})/\ker\mathcal{V}$. It is fixed pointwise by the $S^1$-subgroup $\mathcal{V}(S_i)$ of $T$. We choose an orientation on $X$ and orient each $X_i$ so that the orientation of the normal bundle of $X_i$ agrees with the rotation by the action of $\mathcal{V}(S_i)$ on it. Thus, $X$ is omnioriented. 
%begin{remark}
As usual, the suffix $n+2$ is understood to be $0$ in the following. For $i=0,1,\dots,n+1$, we set 
\[
\widetilde{U}_i=\{(\x,\z)\in \mathcal{Z}_Q\mid z_j\not=0\ \text{for $j\not=i,i+1$}\},\quad
V_i=\tilde{\varphi}_i(\widetilde{U}_i)
\]
where $\tilde{\varphi}_i\colon \mathcal{Z}_Q\to \C_i^2=\C^2$ is defined by $\tilde{\varphi}_i(\x,\z)=(z_i,z_{i+1})$. We note that $S_i\times S_{i+1}$ is acting on $\C^2_i$ by coordinatewise multiplication and $V_i$ is an invariant open subset of $\C^2_i$. We define a subgroup $G_i$ of $S_i\times S_{i+1}$ by 
\[
G_i=\{(h_i,h_{i+1})\in S_i\times S_{i+1}\mid h_i^{a_i}h_{i+1}^{a_{i+1}}=h_i^{b_i}h_{i+1}^{b_{i+1}}=1\}.
\]
The order of $G_i$ is $|a_ib_{i+1}-a_{i+1}b_i|=|\det(v_i,v_{i+1})|$. One can check that the map $\tilde{\varphi}_i$ induces a homeomorphism 
\begin{equation*} \label{eq:Ui}
\varphi_i\colon U_i:=\widetilde{U}_i/\ker\mathcal{V}\to V_i/G_i.
\end{equation*}
The set $\{U_i\}_{i=0}^{n+1}$ is an open covering of $X$, so $\{(U_i,V_i,G_i,\varphi_i)\}_{i=0}^{n+1}$ defines an orbifold structure on $X$. We think of $X$ as an orbifold with this orbifold structure. 
%\end{remark}

\begin{remark}
When $\{v_i,v_{i+1}\}$ forms a basis of $\Z^2$ for any $0\le i\le n$, the quasitoric orbifold $X$ is smooth. In this case $X$ is constructed in \cite[Section 5]{masu99} by a different method and if the sequence $v$ rotates around the origin counterclockwise possibly many times, the $X$ admits an almost complex structure. 
\end{remark} 

The characteristic suborbifold $X_i$ corresponding to $v_i$ is $(\mathcal{Z}_Q\cap \{z_i=0\})/\ker\mathcal{V}$. It is fixed pointwise by the $S^1$-subgroup $\mathcal{V}(S_i)$ of $T$. We orient the normal bundle of $X_i$ though the rotation by the action of $\mathcal{V}(S_i)$ on it, where the orientation on $S_i\ (\subset \C)$ is the standard one. Since $\{v_{n+1},v_0\}$ forms the standard basis of $\Z^2$, the group $G_{n+1}$ above is trivial and the neighborhood $U_{n+1}$ of the $T$-fixed point $X_{n+1}\cap X_0$ is homeomorphic to $V_{n+1}\subset \C^2_{n+1}=\C^2$ though $\varphi_{n+1}$.  We give an orientation on $X$ induced from $\C^2$ though $\varphi_{n+1}$.  Thus, this orientation on $X$ and the orientation on the normal bundle of $X_i$ determines a compatible orientation on each $X_i$, so $X$ is omnioriented. 

Let $\alpha_i$ $(0\le i\le n+1)$ be the element of $H_2(X;\R)$ determined by the characteristic suborbifold $X_i$.  We denote the Poincar\'e dual of $\alpha_i$ by $\alpha_i^\vee$ as before. 

\begin{theorem} \label{theo:main2}
$A^{-1}=(\langle \alpha_i^\vee\cup\alpha_j^\vee,[X]\rangle)_{1\le i,j\le n}$. 
\end{theorem}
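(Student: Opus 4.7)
The plan is to verify entry-by-entry that $B:=(\langle\alpha_i^\vee\cup\alpha_j^\vee,[X]\rangle)_{1\le i,j\le n}$ coincides with the tridiagonal matrix $A^{-1}$ described in Proposition~\ref{prop:inverse_A}. The argument rests on three facts about the omnioriented quasitoric orbifold $X=\mathcal{Z}_Q/\ker\mathcal{V}$. First, a \emph{vanishing}: for $|i-j|\ge 2$ with $1\le i,j\le n$, the corresponding facets of $Q$ share no vertex, so $X_i\cap X_j=\emptyset$ and $\langle\alpha_i^\vee\cup\alpha_j^\vee,[X]\rangle=0$, matching the tridiagonal pattern of $A^{-1}$. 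Second, a \emph{normalization} at the distinguished corner: since $\{v_{n+1},v_0\}$ is the standard basis of $\Z^2$, $G_{n+1}$ is trivial, $\varphi_{n+1}$ is a genuine smooth chart, and $X$ was oriented via it; as $X_{n+1}\cap X_0$ is a single $T$-fixed point at which the normal orientations induced by $\mathcal{V}(S_{n+1})$ and $\mathcal{V}(S_0)$ are the standard complex orientations of the $z_{n+1}$- and $z_0$-axes, we get $\langle\alpha_{n+1}^\vee\cup\alpha_0^\vee,[X]\rangle=1$. Third, the standard \emph{linear relations} for a quasitoric orbifold's cohomology: the map $(\R^2)^*\to H^2(X;\R)$ sending $\lambda$ to $\sum_{i=0}^{n+1}\langle\lambda,v_i\rangle\alpha_i^\vee$ vanishes. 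Applied to the two coordinate functionals (with $v_0=e_2,\,v_{n+1}=e_1$), this yields
\[
\alpha_{n+1}^\vee=-\sum_{i=1}^n a_i\alpha_i^\vee,\qquad \alpha_0^\vee=-\sum_{i=1}^n b_i\alpha_i^\vee.
\]

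With these in hand, write $I_{ij}:=\langle\alpha_i^\vee\cup\alpha_j^\vee,[X]\rangle$. Substituting the first linear relation into $I_{n+1,0}=1$ and using the vanishing collapses the sum to $-a_1 I_{0,1}=1$, so $I_{0,1}=-1/a_1=m_0^{-1}$; symmetrically $I_{n,n+1}=m_n^{-1}$. Next, for each $1\le j\le n$, pair the two linear relations with $\alpha_j^\vee$ and invoke the vanishing to eliminate contributions at distance $\ge 2$. This produces the $2\times 2$ system
\[
\begin{pmatrix}a_j & a_{j+1}\\ b_j & b_{j+1}\end{pmatrix}\begin{pmatrix}I_{jj}\\ I_{j,j+1}\end{pmatrix}
=-I_{j-1,j}\begin{pmatrix}a_{j-1}\\ b_{j-1}\end{pmatrix}+\varepsilon_j,
\]
where the boundary correction $\varepsilon_j\in\R^2$ is zero for $2\le j\le n-1$, while at $j=1$ the absent $I_{0,1}$-coefficient on the left is compensated by $\varepsilon_1=(0,1/a_1)$ (reflecting $I_{0,1}=m_0^{-1}$), and at $j=n$ analogously by $\varepsilon_n=(1/b_n,0)$. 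The coefficient matrix has determinant $m_j\ne 0$, so by Cramer's rule, together with the algebraic identity
\[
a_{j-1}(a_jb_{j+1}-a_{j+1}b_j)+a_{j+1}(a_{j-1}b_j-a_jb_{j-1})=a_j(a_{j-1}b_{j+1}-a_{j+1}b_{j-1})
\]
and its $b$-analog, one finds inductively on $j$ that $I_{j,j+1}=1/m_j$ and $I_{jj}=-\det(v_{j-1},v_{j+1})/(m_{j-1}m_j)=s_j$, which are precisely the entries of $A^{-1}$. The trivial case $n=1$ is handled separately: both relations reduce to $\alpha_0^\vee=-b_1\alpha_1^\vee$ and $\alpha_2^\vee=-a_1\alpha_1^\vee$, so $1=I_{0,2}=a_1b_1\,I_{11}$ gives $I_{11}=1/(a_1b_1)=s_1$.

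The main obstacle is justifying the linear relations for the general (possibly non-primitive, possibly winding) vector sequence $v$ at hand. These are standard for quasitoric orbifolds and follow once $\alpha_i^\vee$ is identified with the first Chern class of the orbi-line bundle $\mathcal{Z}_Q\times_{(S^1)^{n+2}/\ker\mathcal{V}}\C_i$; then the product of these bundles weighted by the $i$-th coordinates of the $v_i$'s is the pullback of a trivial $T$-character, forcing the relation on Chern classes. Orientation bookkeeping in the normalization step is equally routine but must be done with care. Once both are established, everything else in the proof is pure $2\times 2$ linear algebra.
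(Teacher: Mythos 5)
Your proof is correct, and it takes a genuinely different route from the paper's at one key point. Both arguments share the tridiagonal vanishing ($X_i\cap X_j=\emptyset$ for $|i-j|\ge 2$) and the linear relations $\sum_{i=0}^{n+1}\langle u,v_i\rangle\alpha_i^\vee=0$, which the paper establishes as Lemma~\ref{lemm:fundamental} via the equivariant restriction data of Lemma~\ref{lemm:dual}; pairing these with $\alpha_j^\vee$ gives exactly the paper's three-term relation \eqref{eq:linear_relation_among_three}, which is your $2\times 2$ system. Where you diverge is in how the intersection numbers are pinned down: the paper proves $\det(v_i,v_{i+1})\,\alpha_i^\vee\alpha_{i+1}^\vee=1$ at \emph{every} corner (Lemma~\ref{lemm:intersection_number}) by invoking the equivariant integration/localization formula of Meinrenken, and then reads off $m_i^{-1}$ and $s_i$ directly. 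You instead compute the intersection number only at the one genuinely smooth corner $p_{n+1}$ (where $G_{n+1}$ is trivial, so $\langle\alpha_{n+1}^\vee\cup\alpha_0^\vee,[X]\rangle=1$ is an honest transverse count of $+1$ with the chosen omniorientation), and then propagate around the polygon by induction using Cramer's rule on the three-term relations; the nonvanishing of each $m_j$, guaranteed by \eqref{eq:assumption}, keeps the induction running, and your Pl\"ucker-type identity recovers $I_{jj}=s_j$. The paper's approach buys a uniform local formula at each fixed point (and in effect your induction re-derives Lemma~\ref{lemm:intersection_number} as an output rather than an input); yours buys independence from the localization machinery at the cost of the inductive bookkeeping. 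Two small remarks: your boundary corrections $\varepsilon_1,\varepsilon_n$ are unnecessary, since the general right-hand side $-I_{j-1,j}\,(a_{j-1},b_{j-1})^T$ with the conventions $(a_0,b_0)=(0,1)$, $(a_{n+1},b_{n+1})=(1,0)$ already covers $j=1$ and $j=n$; and your justification of the linear relations via orbifold line bundles is a standard Davis--Januszkiewicz-type argument that does go through over $\R$ in this generality, but you should note it is precisely the restriction of the paper's Lemma~\ref{lemm:fundamental} to ordinary cohomology, so it requires the same amount of orbifold input either way.
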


Theorem~\ref{theo:main2_intro} in the introduction follows from Theorem~\ref{theo:main2} and Proposition~\ref{prop:inverse_A}.

The rest of this section is devoted to the proof of Theorem~\ref{theo:main2}. 
Let $\xi_i\in H^2_T(X;\Q)$ $(0\le i\le n+1)$ be the image of $1\in H^0_T(X_i;\Q)$ by the equivariant Gysin map $H^0_T(X_i;\Q)\to H^2_T(X;\Q)$. The restriction of $\xi_i$ to $H^2(X;\Q)$ is $\alpha_i^\vee$. 
We identify $\Z^2$ with $H_2(BT;\Z)$ and think of $v_i$ as an element of $H_2(BT;\Z)$. 

\begin{lemma}[{\cite[Lemmas 12,4 and 12.5]{ha-ma03}}] \label{lemm:dual}
$\{\xi_i|_{p_i}, \xi_{i+1}|_{p_i}\}$ $(p_i=X_i\cap X_{i+1})$ is dual to $\{v_i, v_{i+1}\}$ for $i=0,\dots,n+1$ where $\xi_{n+2}=\xi_0$ and $X_{n+2}=X_0$. 
\end{lemma}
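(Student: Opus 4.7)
\medskip

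\noindent\textbf{Proof plan.} The plan is to reduce to a local computation at the $T$-fixed point $p_i=X_i\cap X_{i+1}$, using the orbifold chart $(U_i,V_i,G_i,\varphi_i)$ constructed above. Throughout, I identify $H^2(BT;\Q)$ with the $\Q$-dual of the cocharacter lattice $\Z^2$ of $T$, so that the pairing with any $v_j\in\Z^2$ is the natural evaluation pairing.

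\medskip

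The first step is to make the local $T$-action explicit. The composition $S_i\times S_{i+1}\hookrightarrow (S^1)^{n+2}\twoheadrightarrow T$ has kernel exactly $G_i$ (as follows immediately from the definition of $G_i$ and $\ker\mathcal{V}$), and since both sides are $2$-dimensional compact tori, it descends to a Lie group isomorphism
\[
\iota_i\colon (S_i\times S_{i+1})/G_i\xrightarrow{\ \sim\ }T.
\]
By the construction of $\varphi_i$, the homeomorphism $U_i\cong V_i/G_i$ is $T$-equivariant once $T$ acts on $V_i/G_i$ through $\iota_i^{-1}$ followed by the coordinatewise $S_i\times S_{i+1}$-action on $V_i\subset\C^2_i=\C_{z_i}\oplus\C_{z_{i+1}}$. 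Under this identification, $X_i\cap U_i=\{z_i=0\}/G_i$ and $X_{i+1}\cap U_i=\{z_{i+1}=0\}/G_i$, and $p_i$ corresponds to the origin.

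\medskip

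The second step is to compute the $T$-weight on the normal direction to $X_i$ at $p_i$. Locally this direction is $\C_{z_i}$, on which $S_i\times S_{i+1}$ acts through its first coordinate character. The explicit formula for $\mathcal{V}$ shows that the two standard basis cocharacters of $S_i\times S_{i+1}$ are sent by $\mathcal{V}$ to $v_i=(a_i,b_i)$ and $v_{i+1}=(a_{i+1},b_{i+1})$ respectively in $\Z^2=X_*(T)$. Transporting along $\iota_i$, the rational character of $T$ that lifts the first coordinate character of $S_i\times S_{i+1}$ is therefore precisely the element $v_i^\vee\in H^2(BT;\Q)$ dual to $v_i$ in the $\Q$-basis $\{v_i,v_{i+1}\}$ of $\Z^2\otimes\Q$. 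Analogously, the weight on $\C_{z_{i+1}}$ is $v_{i+1}^\vee$.

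\medskip

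The final step is to identify $\xi_i|_{p_i}$ with $v_i^\vee$. By naturality of the equivariant Gysin map with respect to the pullback along the inclusion $\{p_i\}\hookrightarrow X$, together with the fact that $X_i\cap X_{i+1}=\{p_i\}$ is transverse inside $U_i$, the class $\xi_i|_{p_i}$ equals the $T$-equivariant Euler class of the normal bundle of $X_i$ in $X$ at $p_i$. Since this normal fiber is the $1$-dimensional $T$-representation computed in Step 2, its equivariant Euler class equals the weight $v_i^\vee$, whence $\xi_i|_{p_i}=v_i^\vee$; symmetrically $\xi_{i+1}|_{p_i}=v_{i+1}^\vee$. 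The pair $\{v_i^\vee,v_{i+1}^\vee\}$ is by construction dual to $\{v_i,v_{i+1}\}$, proving the lemma. The main subtlety, which I expect to be the principal obstacle in writing the argument cleanly, is accounting for the finite isotropy $G_i$ of order $|\det(v_i,v_{i+1})|$: one must justify that with $\Q$-coefficients the Gysin/Euler-class formula for the orbifold quotient $V_i/G_i$ agrees with the one for the smooth cover $V_i$, so that no factor of $|G_i|$ enters. This can be handled by passing to a finite $G_i$-cover near $p_i$ (on which the argument is classical) and using the transfer map, or equivalently by invoking the orbifold version used in \cite{ha-ma03}.
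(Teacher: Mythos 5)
Your argument is correct, and it supplies a proof where the paper gives none: Lemma~\ref{lemm:dual} is simply quoted from \cite[Lemmas 12.4 and 12.5]{ha-ma03}, and your local computation in the chart $(U_i,V_i,G_i,\varphi_i)$ is essentially the standard argument behind that citation. The key points all check out: the restriction of $\mathcal{V}$ to $S_i\times S_{i+1}$ is surjective onto $T$ with kernel exactly $G_i$ because $\det(v_i,v_{i+1})\neq 0$, it sends the standard cocharacters to $v_i$ and $v_{i+1}$, so the rational $T$-weight on $\C_{z_i}$ is indeed the element of $H^2(BT;\Q)$ dual to $v_i$ in the basis $\{v_i,v_{i+1}\}$; and the self-intersection formula identifies $\xi_i|_{p_i}$ with that weight. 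You are also right to flag the only genuine subtlety, the $\Q$-coefficient normalization of the orbifold Gysin map; note that your conclusion $\xi_i|_{p_i}=v_i^\vee$ is exactly the normalization forced by the integration formula \eqref{eq:integration_formula} and Lemma~\ref{lemm:intersection_number}, and that the sign comes out right because the paper orients the normal bundle of $X_i$ by the $\mathcal{V}(S_i)$-rotation, i.e.\ by the standard orientation of $\C_{z_i}$ that you implicitly use.
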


\begin{lemma}[{\cite[Lemma 12.6]{ha-ma03}}] \label{lemm:fundamental}
We have 
\begin{equation} \label{eq:fundamental}
\pi^*(u)=\sum_{i=0}^{n+1}\langle u,v_i\rangle\xi_i\quad\text{for $u\in H^2(BT;\Q)$}
\end{equation}
where $\pi^*\colon H^*(BT;\Q)\to H^*_T(X;\Q)$ is the homomorphism induced from $\pi\colon X\to pt$. 
\end{lemma}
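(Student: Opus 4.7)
The plan is to verify the identity in $H^2_T(X;\Q)$ by restricting to the $T$-fixed points of $X$, which are the $n+2$ vertices $p_i:=X_i\cap X_{i+1}$ of the orbit polygon $Q$ (indices mod $n+2$). Since $X$ is a quasitoric orbifold over a polygon, its rational cohomology is concentrated in even degrees, so $X$ is equivariantly formal over $\Q$ and the restriction map
\[
\iota^*\colon H^*_T(X;\Q)\longrightarrow\bigoplus_{i=0}^{n+1} H^*_T(p_i;\Q)=\bigoplus_{i=0}^{n+1} H^*(BT;\Q)
\]
is injective. It therefore suffices to check \eqref{eq:fundamental} after restricting to each vertex $p_i$.

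Fix $i$. Since $\xi_j$ is the equivariant Gysin image of $1\in H^0_T(X_j;\Q)$, it is supported on $X_j$, hence $\xi_j|_{p_i}=0$ whenever $j\notin\{i,i+1\}$. Thus the restriction of the right-hand side of \eqref{eq:fundamental} to $p_i$ collapses to
\[
\langle u,v_i\rangle\,\xi_i|_{p_i}+\langle u,v_{i+1}\rangle\,\xi_{i+1}|_{p_i}.
\]
On the left-hand side, $\pi^*(u)|_{p_i}=u$ under the canonical identification $H^2_T(p_i;\Q)=H^2(BT;\Q)$, because the restriction of $\pi$ to a fixed point is the constant map to the basepoint. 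By assumption \eqref{eq:assumption}, $\det(v_i,v_{i+1})\neq 0$, so $\{v_i,v_{i+1}\}$ is a $\Q$-basis of $H_2(BT;\Q)$, and by Lemma~\ref{lemm:dual} the set $\{\xi_i|_{p_i},\xi_{i+1}|_{p_i}\}$ is its Kronecker dual basis in $H^2(BT;\Q)$. Therefore every $u\in H^2(BT;\Q)$ expands uniquely as
\[
u=\langle u,v_i\rangle\,\xi_i|_{p_i}+\langle u,v_{i+1}\rangle\,\xi_{i+1}|_{p_i},
\]
which is exactly the identity we needed at $p_i$. Running over all $i$ and invoking the injectivity of $\iota^*$ concludes the proof.

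The main obstacle is the equivariant formality of $X$ over $\Q$, i.e.\ the injectivity of $\iota^*$. For a smooth quasitoric manifold this is classical. In the orbifold setting one can argue directly from the atlas $\{(U_i,V_i,G_i,\varphi_i)\}_{i=0}^{n+1}$ of Section~\ref{sect:5}: each finite group $G_i$ has vanishing rational cohomology in positive degrees (as $|G_i|$ is invertible in $\Q$), so each chart $V_i/G_i$ is rationally equivariantly formal with the same fixed-point structure as the smooth model $V_i$, and a Mayer--Vietoris induction over the cover $\{U_i\}$ yields both the vanishing of $H^{\mathrm{odd}}(X;\Q)$ and the injectivity at fixed points. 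Alternatively, one lifts to the smooth moment-angle manifold $\mathcal{Z}_Q$ (which is equivariantly formal) and descends via the almost-free action of $\ker\mathcal{V}$, using that its isotropy orders are invertible in $\Q$.
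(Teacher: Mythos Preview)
Your proof is correct and follows essentially the same route as the paper's: restrict both sides to each fixed point $p_i$, use Lemma~\ref{lemm:dual} to identify $\{\xi_i|_{p_i},\xi_{i+1}|_{p_i}\}$ as the dual basis of $\{v_i,v_{i+1}\}$ so that the identity holds at $p_i$, and then invoke injectivity of the restriction map $H^*_T(X;\Q)\to H^*_T(X^T;\Q)$. The paper simply asserts this injectivity, whereas you supply an additional justification via equivariant formality over $\Q$; this extra paragraph is not needed for the comparison but is a reasonable elaboration.
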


\begin{proof}
This is an easy consequence of Lemma~\ref{lemm:dual}. Indeed, if we take $u$ to be $\xi_i|_{p_i}$ or $\xi_{i+1}|_{p_i}$, then \eqref{eq:fundamental} restricted to $p_i$ holds by Lemma~\ref{lemm:dual}. Since $\{\xi_i|_{p_i},\xi_{i+1}|_{p_i}\}$ is a basis of $H^*(BT;\Q)$, \eqref{eq:fundamental} restricted to $p_i$ holds for any $u\in H^2(BT;\Q)$. This implies \eqref{eq:fundamental} because the restriction map $H^*_T(X;\Q)\to H^*_T(X^T;\Q)$ is injective and $X^T=\{p_i\}_{i=0}^{n+1}$. 
\end{proof}

Restricting \eqref{eq:fundamental} to $H^*(X;\Q)$, we obtain 
\begin{equation*} \label{eq:linear_relation}
\sum_{i=0}^{n+1}\langle u,v_i\rangle \alpha_i^\vee=0\quad\text{for $u\in H^2(BT;\Q)$}.
\end{equation*}
For each $1\le i\le n$, we take the cup product with $\alpha_i^\vee$ on the both sides of this identity and evaluate on the fundamental class $[X]$. Then, since the $X_i$ does not intersect with $X_j$ when $|i-j|\ge 2$, we obtain 
\begin{equation} \label{eq:linear_relation_2}
\langle u,v_{i-1}\rangle \alpha_{i-1}^\vee\alpha_i^\vee+ \langle u,v_{i}\rangle(\alpha_{i}^\vee)^2+\langle u,v_{i+1}\rangle\alpha_{i+1}^\vee\alpha_i^\vee=0
\end{equation} 
where $\beta\gamma$ for $\beta,\gamma\in H^2(X;\Q)$ denotes $\langle \beta\cup\gamma,[X]\rangle$ as before. 
Since the identity \eqref{eq:linear_relation_2} holds for any $u$, it reduces to 
\begin{equation} \label{eq:linear_relation_among_three}
(\alpha_{i-1}^\vee\alpha_i^\vee) v_{i-1}+(\alpha_{i}^\vee)^2 v_i+(\alpha_{i+1}^\vee\alpha_i^\vee) v_{i+1}=0.
\end{equation}
Therefore, the $s_i$ in \eqref{eq:inverse_A} is given by 
\begin{equation} \label{eq:di}
s_i={(\alpha_i^\vee)^2}/\det(v_i,v_{i+1})\alpha_i^\vee\alpha_{i+1}^\vee
\end{equation}
because $s_i=-\det(v_{i-1},v_{i+1})/\det(v_{i-1},v_i)\det(v_i,v_{i+1})$ and 
\[
\begin{split}
\det(v_{i-1},v_{i+1})&=\det\left(v_{i-1},-\frac{(\alpha_{i-1}^\vee\alpha_i^\vee)v_{i-1}+(\alpha_i^\vee)^2v_i}{\alpha_i^\vee\alpha_{i+1}^\vee}\right)\\
&=-(\alpha_i^\vee)^2\det(v_{i-1},v_i)/\alpha_i^\vee\alpha_{i+1}^\vee
\end{split}
\]
where we used \eqref{eq:linear_relation_among_three} at the former identity above. 

\begin{lemma} \label{lemm:intersection_number}
$\det(v_i,v_{i+1})\alpha_i^\vee\alpha_{i+1}^\vee=1$. 
\end{lemma}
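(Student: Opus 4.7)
The plan is to deduce the lemma from the vector identity~\eqref{eq:linear_relation_among_three} in two steps: first propagate the claimed equality into an independence-of-$i$ statement, then pin down the constant at the base point $p_{n+1}$ where the local isotropy is trivial.

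For propagation, I would apply the linear functional $w\mapsto \det(w,v_i)$ to~\eqref{eq:linear_relation_among_three}. Since this functional kills $v_i$, the middle term drops out and one obtains
\[
\det(v_{i-1},v_i)\,\alpha_{i-1}^\vee\alpha_i^\vee \;=\; \det(v_i,v_{i+1})\,\alpha_i^\vee\alpha_{i+1}^\vee \qquad (1\le i\le n).
\]
Repeating the derivation of~\eqref{eq:linear_relation_among_three} for the boundary indices $i=0$ and $i=n+1$---using the cyclic facts that $X_0$ meets $X_j$ only for $j\in\{n+1,0,1\}$ and $X_{n+1}$ meets $X_j$ only for $j\in\{n,n+1,0\}$, together with the convention $v_{n+2}=v_0$, $\alpha_{n+2}^\vee=\alpha_0^\vee$---yields the analogous three-term vector relations, and crossing these with $v_0$ and $v_{n+1}$ closes the chain of equalities. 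Thus
\[
f(j)\;:=\;\det(v_j,v_{j+1})\,\alpha_j^\vee\alpha_{j+1}^\vee
\]
is the same rational number for every $j=0,1,\dots,n+1$ (cyclically).

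To identify this constant I would evaluate at $j=n+1$. There $v_{n+1}=\begin{pmatrix}1\\0\end{pmatrix}$ and $v_0=\begin{pmatrix}0\\1\end{pmatrix}$ give $\det(v_{n+1},v_0)=1$ and $|G_{n+1}|=1$, so the chart $\varphi_{n+1}\colon U_{n+1}\to V_{n+1}\subset \C^2$ is a genuine homeomorphism onto an open subset of $\C^2$. By construction, the orientation of $X$ restricts at $p_{n+1}$ to the standard complex orientation of $\C^2$. In this chart $X_{n+1}$ and $X_0$ correspond to the two coordinate axes $\{z_{n+1}=0\}$ and $\{z_0=0\}$; because $\mathcal{V}(S_{n+1})$ and $\mathcal{V}(S_0)$ are the two standard $S^1$-factors of $T$, the normal bundles $\nu(X_{n+1})$ and $\nu(X_0)$ inherit the standard complex orientation of their respective axes, and therefore the compatible orientations on $X_{n+1}$ and $X_0$ themselves coincide with the complex orientations of the opposite axes. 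Hence the transverse intersection at $p_{n+1}$ contributes $+1$, giving $\alpha_{n+1}^\vee\alpha_0^\vee=1$ and so $f(n+1)=1$.

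The only real place to be careful is the orientation bookkeeping at $p_{n+1}$: one has to verify that the $X$-orientation, the normal-bundle orientations supplied by the omniorientation, and the induced orientations on $X_{n+1}$ and $X_0$ all agree with the standard complex orientation of $\C^2$. Since $G_{n+1}$ is trivial and $\mathcal{V}(S_{n+1})$, $\mathcal{V}(S_0)$ are the two coordinate factors of $T$, this is a direct unpacking of the conventions fixed in Section~\ref{sect:5} rather than a substantive obstacle.
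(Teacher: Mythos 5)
Your proof is correct, but it takes a genuinely different route from the paper. The paper proves the lemma in one stroke, uniformly for every $i$, by invoking the localization/integration formula for the equivariant Gysin map $\pi_!$ (Meinrenken's theorem, or its multi-fan version \eqref{eq:integration_formula}): applied to $\beta=\det(v_i,v_{i+1})\,\xi_i\cup\xi_{i+1}$, only the summand at $p_i$ survives, the denominator cancels, and $\pi_!(\beta)=1$ restricts to the claim in ordinary cohomology. You avoid localization entirely: you cross the three-term relations \eqref{eq:linear_relation_among_three} with $v_i$ to show that $f(j)=\det(v_j,v_{j+1})\,\alpha_j^\vee\alpha_{j+1}^\vee$ is independent of $j$ (your extension of the relations to the boundary indices $i=0$ and $i=n+1$ is legitimate, since $X_0$ and $X_{n+1}$ each meet only their cyclic neighbours, so the same derivation applies), and you then anchor the constant at the corner $p_{n+1}$, where $G_{n+1}$ is trivial, the two characteristic suborbifolds are the coordinate axes of the chart $V_{n+1}\subset\C^2$, and the conventions of Section~\ref{sect:5} make the transverse intersection count $+1$. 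Your orientation bookkeeping does close up: the normal orientations of $X_{n+1}$ and $X_0$ come from the standard rotations of $\mathcal{V}(S_{n+1})$ and $\mathcal{V}(S_0)$ on the coordinate lines, the ambient orientation is the complex one on $\C^2_{n+1}$, and interchanging two complex lines preserves orientation, so the local intersection number is $+1$ and $f(n+1)=\det(v_{n+1},v_0)\,\alpha_{n+1}^\vee\alpha_0^\vee=1$. The paper's route buys uniformity (no special role for any vertex) at the cost of importing the integration formula as an external input; your route is more elementary and self-contained, using only relations already established in the section plus a single manifold-point intersection computation, but it leans on the normalization $v_{n+1}=\begin{pmatrix}1\\0\end{pmatrix}$, $v_0=\begin{pmatrix}0\\1\end{pmatrix}$ to guarantee a smooth anchor point.
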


\begin{proof}
Since $\{\xi_i|_{p_i},\xi_{i+1}|_{p_i}\}$ is dual to $\{v_i,v_{i+1}\}$ by Lemma~\ref{lemm:dual}, it follows from \cite[Theorem 2.1]{mein98}\footnote{The integration formula is \emph{defined} in the category of multi-fans in \cite[the formula above Lemma 8.4]{ha-ma03}.} that the integration formula for the equivariant Gysin map $\pi_!\colon H^4_T(X;\Q)\to H^{0}_T(pt;\Q)=\Q$ is given by 
\begin{equation}\label{eq:integration_formula}
\pi_!(\beta)=\sum_{i=0}^{n+1}\frac{\beta|_{p_i}}{\det(v_i,v_{i+1})\xi_i|_{p_i}\xi_{i+1}|_{p_i}} \quad\text{for $\beta\in H^4_T(X;\Q)$}. 
\end{equation}
Since $(\xi_i\cup\xi_{i+1})|_{p_j}=0$ unless $j=i$, we have $$\pi_!(\det(v_i,v_{i+1})\xi_i\cup\xi_{i+1})=1.$$ Restricting this identity to ordinary cohomology, we obtain the lemma because $\pi_!$ and $\xi_i$ in ordinary cohomology are respectively the evaluation on $[X]$ and $\alpha_i^\vee$ for any $0\le i\le n+1$. 
\end{proof}

By Lemma~\ref{lemm:intersection_number} and \eqref{eq:di}, $m_i^{-1}$ and $s_i$ in \eqref{eq:inverse_A} are respectively $\alpha_i^\vee\alpha_{i+1}^\vee$ and $(\alpha_i^\vee)^2$, so the matrix in \eqref{eq:inverse_A} is the intersection matrix $(\alpha_i^\vee\alpha_j^\vee)$. This proves Theorem~\ref{theo:main2}.

\end{document}